\theoremstyle{plain}
\theoremstyle{plain}
\newtheorem{theorem}{Theorem}[section]
\newtheorem{proposition}[theorem]{Proposition}
\newtheorem{corollary}[theorem]{Corollary}
\theoremstyle{definition}
\newtheorem{lemmadefinition}[theorem]{Lemma-Definition}
\newtheorem{definition}[theorem]{Definition}
\newtheorem{question}[theorem]{Question}
\newtheorem{remark}[theorem]{Remark}
\newcommand{\bord}{\partial}
\newcommand{\C}{\mathbb{C}}
\newcommand{\fn}{f^\bullet_\gamma}
\newcommand{\fb}{f^\circ_\gamma}
\renewcommand{\ge}{\geqslant}
\newcommand{\Hy}{\mathbb{H}}
\newcommand{\HH}{\mathrm{H}}
\newcommand{\N}{\mathbb{N}}
\newcommand{\OO}{\mathcal{O}}
\newcommand{\Q}{\mathbb{Q}}
\newcommand{\R}{\mathbb{R}}
\newcommand{\Sn}{S^\bullet_\gamma}
\newcommand{\Sb}{S^\circ_\gamma}
\newcommand{\SLZ}{\mathrm{SL_2(\Z)}}
\newcommand{\Sph}{\mathbb{S}}
\newcommand{\term}[1]{{\bf #1}}
\newcommand{\U}{\mathrm{T}^1}
\newcommand{\Z}{\mathbb{Z}}
\begin{document}

\title{Divide monodromies and antitwists on surfaces}
\author{Pierre Dehornoy}
\thanks{PD is supported by the projects ANR-15-IDEX-02 and ANR-11LABX-0025-01}
\author{Livio Liechti}
\AtEndDocument{\bigskip{\footnotesize%
  \textsc{Pierre Dehornoy}\par {Univ. Grenoble Alpes, CNRS, Institut Fourier, F-38000 Grenoble, France} \par  
  \url{pierre.dehornoy@univ-grenoble-alpes.fr} \par
  \url{http://www-fourier.ujf-grenoble.fr/~dehornop/}\par
}}
\AtEndDocument{\bigskip{\footnotesize
  \textsc{Livio Liechti}\par {D\'epartement de Math\'ematiques, Universit\'e de Fribourg, Chemin du Mus\'ee 23, 1700 Fribourg, Suisse} \par  
  \url{livio.liechti@unifr.ch} \par
  \url{https://homeweb.unifr.ch/liechtli/pub/} \par
}}
\date{\today}

\begin{abstract}
A divide on an orientable 2-orbifold gives rise to a fibration of the unit tangent bundle to the orbifold.
We characterize the corresponding monodromies as exactly the products of a left-veering horizontal and a right-veering vertical antitwist 
with respect to a cylinder decomposition, 
where the notion of an antitwist is an orientation-reversing analogue of a multitwist. 
Many divide monodromies are pseudo-Anosov and we give plenty of examples.
In particular, we show that there exist divide monodromies with stretch factor arbitrarily close to one,  
and give an example none of whose powers can be obtained by Penner's or Thurston's construction of pseudo-Anosov mapping classes.
As a side product, we also get a new combinatorial construction of pseudo-Anosov mapping classes in terms of 
products of antitwists.
\end{abstract}


\maketitle


\section{Introduction}

The unit tangent bundle to an orientable 2-orbifold fibres over the unit circle in ways that can be encoded by specific  
multicurves on the orbifold so that the complementary regions admit a black-and-white coloring. Given such a multicurve, a 
fibre surface is defined by all the unit vectors based at points of the multicurve and pointing toward the black faces. 
Using the geodesic flow, this construction was discovered in 1917 by Birkhoff for the case where the multicurve is geodesic on a Riemannian surface~\cite{Birkhoff}. 
It was also used in~1975 by A'Campo to describe the monodromy map associated to a singularity of a complex algebraic curve using
relative arcs on the unit disc, ``partages'' in the original french paper~\cite{ACampo1, ACampo2}. A'Campo's construction was later generalized  
by Ishikawa, who replaced the disc by arbitrary surfaces, with or without boundary~\cite{Ishikawa}. 
In the context of Birkhoff's construction, it was noticed by Fried that the monodromy maps are of pseudo-Anosov type if the underlying surface is hyperbolic~\cite{Fried}.
We call the multicurves in this construction \term{divides} and the associated maps \term{divide monodromies}, and refer to Section~\ref{S:defi} for the precise definitions.

A'Campo and Ishikawa described divide monodromies as a product of three multitwists. 
Our first result is a structure theorem for divide monodromies, where we reduce the number of multitwists from three to two, 
but we actually replace the notion of multitwist with the new notion of \term{antitwist} --- a variant that reverses the orientation of the surface. 
These antitwists cannot be made along arbitrary curves: a specific combinatorial property is required, 
and we call surfaces and curves that satisfy them~\term{Ba'cfi-tiled surfaces}, for Birkhof-A'Campo-Fried-Ishikawa. 
We again refer to Section~\ref{S:defi} for the precise definitions. 

\begin{theorem}
\label{T:antitwist}
A divide monodromy is the product of a right-veering and a left-veering antitwist.
\end{theorem}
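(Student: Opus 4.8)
The plan is to start from A'Campo and Ishikawa's description of a divide monodromy as an ordered product of three multitwists and to reorganise it into a product of two antitwists. Recall that the fibre surface $F$ of a divide $\gamma$ consists of the unit vectors based on $\gamma$ and pointing into the black faces, and that the monodromy can be written (A'Campo--Ishikawa) as $\phi=\tau_\circ\circ\tau_v\circ\tau_\bullet$, where $\tau_\bullet$ is a multitwist along curves running once around the black faces, $\tau_\circ$ a multitwist along curves running once around the white faces, and $\tau_v$ a multitwist along small loops at the vertices of $\gamma$. The first step is to equip $F$ with the structure of a Ba'cfi-tiled surface from Section~\ref{S:defi}: $F$ is tiled by rectangles coming from the local branches of $\gamma$ at its vertices, and these rectangles assemble into two transverse cylinder decompositions --- a \emph{horizontal} one whose cylinders are annular neighbourhoods of the core curves of $\tau_\bullet$ (one around each black face) and a \emph{vertical} one whose cylinders are annular neighbourhoods of the core curves of $\tau_\circ$ (one around each white face) --- with the loops at the vertices sitting exactly where a horizontal and a vertical cylinder cross. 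Verifying the Ba'cfi axioms is then a bookkeeping exercise about the gluings of the rectangles at the vertices.

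The heart of the proof is to rewrite $\phi$ as a product of two antitwists along these two decompositions. A multitwist can never be an antitwist, since antitwists reverse the orientation of $F$, so the orientation reversal has to be produced out of the black/white structure. The idea is to split the vertex multitwist as a product $\tau_v=\rho^\circ\circ\rho^\bullet$ of two orientation-reversing maps supported near the vertices, where $\rho^\bullet$ is compatible with the horizontal cylinders and $\rho^\circ$ with the vertical ones --- a single Dehn twist at a vertex is a composition of two reflections of its annular neighbourhood that differ by that twist, and one chooses them so that one reflects ``along'' the crossing horizontal cylinder and the other ``along'' the crossing vertical cylinder. Setting $H=\rho^\bullet\circ\tau_\bullet$ and $V=\tau_\circ\circ\rho^\circ$ then gives $\phi=V\circ H$, with $H$ orientation-reversing and supported on the horizontal cylinders and $V$ orientation-reversing and supported on the vertical ones. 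The substance is to check that $H$ is genuinely a horizontal antitwist: that the reflections $\rho^\bullet$ at the various vertices around a fixed black face, together with the shear of $\tau_\bullet$ there, really do assemble into a single orientation-reversing reflection-with-shear of the corresponding horizontal cylinder --- and symmetrically for $V$. This is precisely where the Ba'cfi hypothesis enters: one checks the statement one cylinder at a time, coherence across adjacent rectangles of the tiling being exactly what the combinatorics of the divide provides. Geometrically, this matches the first-return picture of the Birkhoff/geodesic flow: a vector based on $\gamma$ and pointing into black crosses one black face and then one white face before pointing into black again; ``crossing the black face'' is $H$ and ``crossing the white face'' is $V$, and each such crossing interchanges the two sides of $\gamma$, which is what forces both factors to reverse orientation.

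It remains to pin down the veering directions. The Dehn twists constituting $\tau_\bullet$ all have the same sign, and combined with $\rho^\bullet$ this makes $H$ left-veering in the sense of Section~\ref{S:defi} (the orientation-reversing analogue of the right-veering property of Honda--Kazez--Mati\'c); the white family has the opposite behaviour, making $V$ right-veering. I would verify this on a single rectangle of the tiling and then propagate it, using that the tiling is connected and the black/white colouring coherent.

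The main difficulty is the second step. Factoring a single vertex twist as a product of two reflections is trivial on one annulus; what is not is arranging all of these reflections, simultaneously over every vertex, to be compatible with one fixed horizontal and one fixed vertical cylinder decomposition, so that $H$ and $V$ come out to be honest antitwists. This compatibility is really the content of the theorem, and it is exactly the point where the orientation and sign bookkeeping at the vertices of $\gamma$ --- where bands of $F$ are glued with flips --- must all be made to agree.
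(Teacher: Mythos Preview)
Your plan has a structural problem at its core: the maps $\rho^\bullet$ and $\rho^\circ$ you want cannot exist as described. You ask for orientation-reversing homeomorphisms of the closed surface $F$ that are ``supported near the vertices,'' i.e.\ equal to the identity outside annular neighbourhoods of the vertex curves. But an orientation-reversing homeomorphism of a connected oriented surface reverses orientation at every point; it cannot agree with the identity on any open set. The factorisation of a single Dehn twist on an annulus as a product of two reflections is fine \emph{on that annulus}, but neither reflection extends by the identity to a homeomorphism of $F$. Consequently your $H=\rho^\bullet\circ\tau_\bullet$ is, away from the vertex annuli, just $\tau_\bullet$ --- orientation-preserving --- and hence is not an antitwist. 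The ``assembling'' you flag as the main difficulty is not just hard; in the setup you have written it is impossible, because the pieces you are trying to assemble into a global orientation-reversing map are locally the identity on most of the surface.

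The paper avoids this obstruction by never starting from the three-multitwist formula. Instead it introduces the \emph{sister surface} $\Sb$ (vectors pointing toward the white faces) and the canonical orientation-reversing identification $\sigma:\Sn\to\Sb$ given by rotating tangent vectors through $180^\circ$; crucially, $\sigma$ is global from the outset. The monodromy is then decomposed as the composition of two \emph{half-return maps} $\fn:\Sn\to\Sb$ and $\fb:\Sb\to\Sn$, and each of these is computed directly from the geometry of the fibration by tracking where horizontal and vertical sides of the square tiles are sent (horizontal sides get shifted by $(1,1)$, vertical sides wrap across a black or white face with a specific slope). Read through $\sigma$, each half-return map is a globally defined orientation-reversing map of the Ba'cfi-tiled surface, and the computation shows it is exactly an antitwist. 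If you want to rescue your algebraic-regrouping strategy, what you actually need is a \emph{global} orientation-reversing involution playing the role of your $\rho^\bullet$ --- and producing one, together with the verification that it interacts correctly with $\tau_\bullet$, $\tau_\circ$ and $\tau_v$, is essentially equivalent to the paper's half-return computation.
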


The statement of Theorem~\ref{T:antitwist} is illustrated in Figure~\ref{F:Intro} below. 
Depicted on the left is a genus 2 orbifold~$\OO$ with 2 cone points, and a divide~$\gamma$ in red on~$\OO$. 
The fibre~$\Sn$ sits in the unit tangent bundle~$\U\OO$. It consists of those tangent vectors based on~$\gamma$ and pointing toward the black faces. 
Depicted on the top right is a square-tiled model of~$\Sn$, where each square corresponds to an edge between two double points of~$\gamma$. The identifications of the sides are marked with colors. Note that travelling along the $(2,2)$-vector always brings one back to the initial position: this is the Ba'cfi-tiling property. The surface has two horizontal cylinders that correspond to the two black faces of~$\OO\setminus\gamma$, and three vertical cylinders that correspond to the three white faces of~$\OO\setminus\gamma$. The effect of a horizontal antitwist on one vertical cylinder made of four square tiles is depicted on the bottom right. It acts on every square tile like the composition of a transvection and a reflection along a horizontal axis. On the top horizontal cylinder, the images of the squares wrap almost twice along the cylinder, illustrating the fact that the corresponding black face contains an order 2 point.

\begin{figure}[h!]
	\includegraphics[width=.49\textwidth]{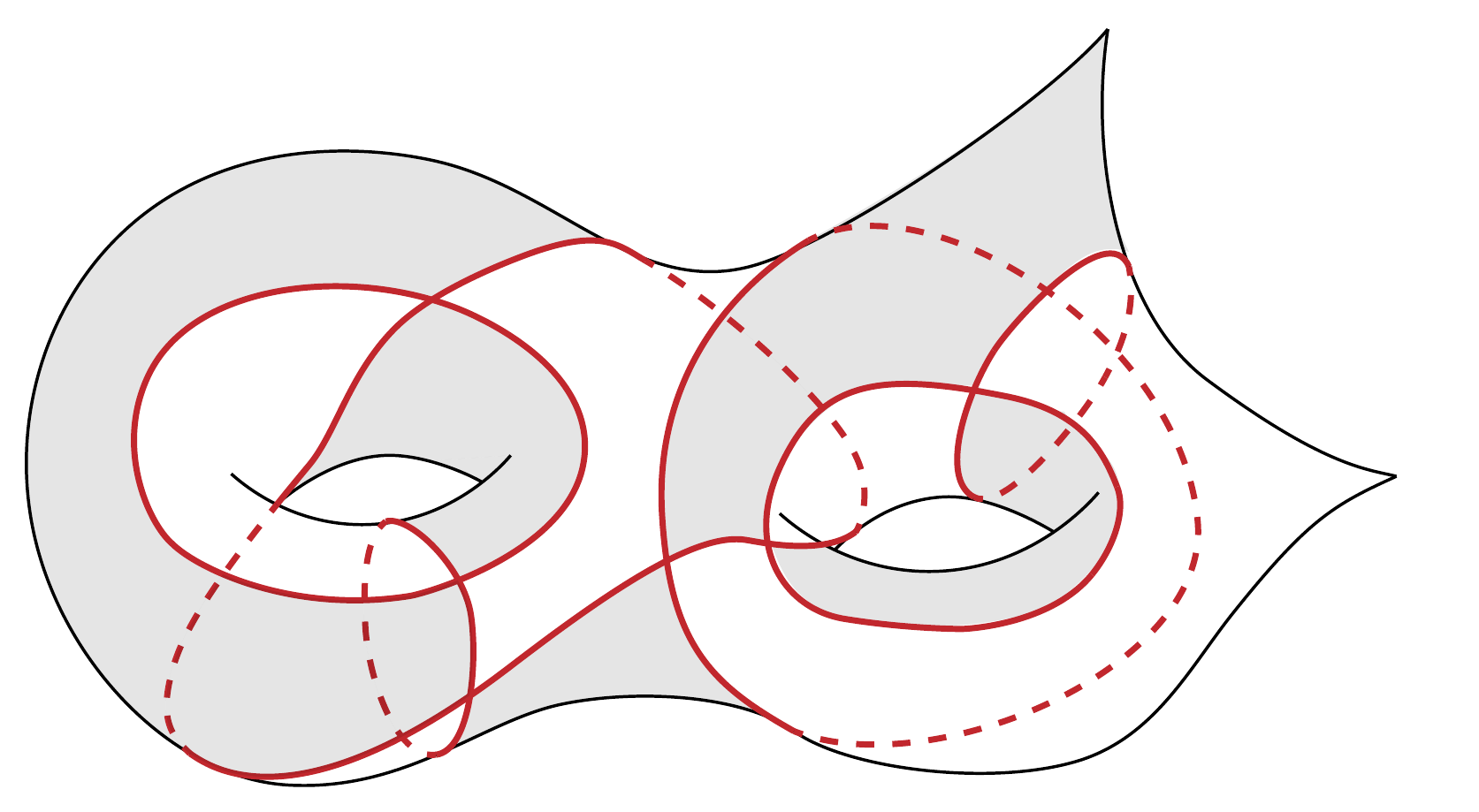}
	\includegraphics[width=.49\textwidth]{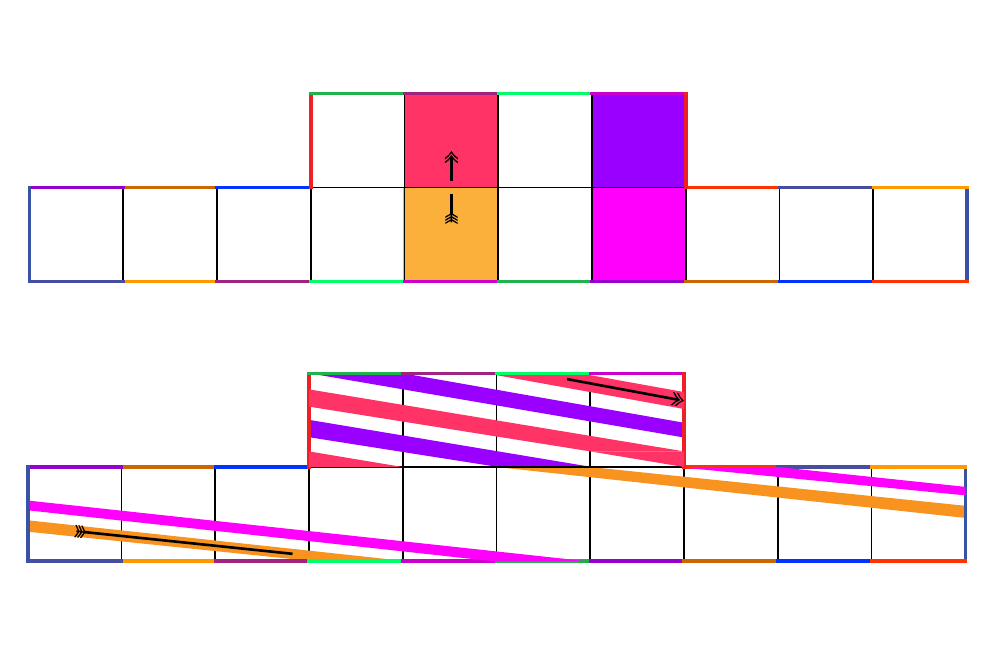}
	\caption{A divide on a genus 2 orbifold~$\OO$ with 2 cone points, and the corresponding Ba'cfi-tiled surface.}
	\label{F:Intro}
\end{figure}

We also prove the following converse result to Theorem~\ref{T:antitwist}. 

\begin{theorem}\label{T:converse}
Let~$S$ be a Ba'cfi-tiled surface and let~$f:S\to S$ be a composition of a left-veering horizontal and a right-veering vertical antitwist. 
Then there exists an orientable 2-orbifold~$\OO$ and a divide~$\gamma$ on~$\OO$ so that the associated fibre~$\Sn$ is homeomorphic 
to~$S$ and the divide monodromy is conjugate to~$f$.
\end{theorem}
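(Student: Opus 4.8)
The plan is to run the construction of Section~\ref{S:defi} backwards: starting from the purely combinatorial data carried by a Ba'cfi-tiled surface $S$ --- its square tiles, the two transverse families of cylinders, and the side identifications --- I would build a four-valent graph $\gamma$ on an orientable $2$-orbifold $\OO$ and then verify that $\gamma$ is a divide realising the pair $(S,f)$. Since the homeomorphism type of $\Sn$ and the conjugacy class of the divide monodromy depend only on the combinatorial type of $(\OO,\gamma)$, it is enough to produce combinatorial data matching $S$ and $f$.

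In the forward construction each square tile of $\Sn$ is an edge of $\gamma$ joining two consecutive double points, each horizontal cylinder is a black face of $\OO\setminus\gamma$, and each vertical cylinder is a white face. I would therefore declare the edge set of $\gamma$ to be the set of tiles of $S$, and recover the double points of $\gamma$ from the corners of the tiling, grouped according to the side identifications; the Ba'cfi-tiling property (triviality of $(2,2)$-translations) is precisely what makes this grouping consistent, so that $\gamma$ is four-valent with well-defined cyclic orders at its vertices. Thickening $\gamma$ and capping the boundary circles coming from the horizontal (resp.\ vertical) cylinders with black (resp.\ white) orbifold discs yields an orbifold $\OO$, where the order of the cone point in each capping disc is read off from how the corresponding cylinder closes up --- equivalently, from the shearing data of the horizontal and vertical antitwists constituting $f$, so that in Figure~\ref{F:Intro} the black cylinder of four tiles whose images wrap almost twice forces an order-$2$ cone point; connectedness of $\OO$ follows from that of $S$. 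One then checks that $\gamma$ is a divide: it is four-valent, its complementary regions carry two colours that alternate around each vertex (because the horizontal and vertical sides of a tile alternate), and each complementary region is an orbifold disc, being assembled from the tiles of a single cylinder --- an annulus whose two boundary circles are glued onto $\gamma$ and then capped. By construction the fibre $\Sn\subset\U\OO$ is tiled by the same squares with the same identifications as $S$, so there is a homeomorphism $h\colon\Sn\to S$ carrying the black/white cylinder decomposition to the horizontal/vertical one.

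It remains to match the monodromy. Transporting the divide monodromy by $h$ and applying Theorem~\ref{T:antitwist}, one obtains a product of a left-veering horizontal and a right-veering vertical antitwist with respect to the cylinder decomposition of $S$, and $f$ is such a product by hypothesis; the comparison then reduces to checking that such a product is determined by the cylinder decomposition together with the two veering normalisations, which I would extract from the explicit description of antitwists in Section~\ref{S:defi}. I expect this final comparison to be routine once Theorem~\ref{T:antitwist} is in hand. The genuine obstacle is the reconstruction step: showing that the Ba'cfi-tiling hypothesis is not merely necessary (as in Theorem~\ref{T:antitwist}) but sufficient, so that every complementary region genuinely closes up as an orbifold disc with an honest finite cone order and the capping is globally consistent --- and, in particular, that the cone orders one reads off agree with the shearing data of $f$, so that the resulting divide monodromy is conjugate to $f$ itself and not merely to some other product of a horizontal and a vertical antitwist.
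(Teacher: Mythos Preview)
Your proposal is correct and follows essentially the same route as the paper: associate to each square tile a cooriented edge, use the Ba'cfi relation $NENE(q)=q$ to attach them four-by-four into a fat graph, cap each boundary circle (one per cylinder) with a disc to obtain a surface carrying a divide, and then insert into each disc a cone point whose order equals the exponent of the corresponding antitwist, so that the analysis underlying Theorem~\ref{T:antitwist} forces the divide monodromy to be exactly $\tau_v\circ\tau_h$. The paper's argument is in fact terser than yours---it does not invoke Theorem~\ref{T:antitwist} again at the end, since the cone orders are chosen precisely so that the computation of Section~\ref{S:mono} reproduces the prescribed shearing data; your worry about the capping being the ``genuine obstacle'' is slightly misplaced, as capping a boundary circle by an orbifold disc of any order is always possible, and the Ba'cfi condition is used only to make the edge attachments at the vertices coherent.
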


A homeomorphism~$f$ of a surface is \term{pseudo-Anosov} if there exists a pair of transverse, 
singular measured foliations so that~$f$ stretches one of them by a factor~$\lambda>1$ and contracts the other by a factor~$1/\lambda$. 
By Thurston's classification of surface mapping classes, every irreducible mapping class either has a periodic or a pseudo-Anosov representative~\cite{ThurstonConstruction}. 
The number~$\lambda$ is called the \term{stretch factor} of the pseudo-Anosov mapping class. 
A particular feature of divide monodromies is that we can find among them pseudo-Anosov maps of minimal stretch factor for the given surface: 
both the CAT-map on the torus and the stretch factor minimizing pseudo-Anosov on the genus~2 surface are divide monodromies. 
We provide these examples and the calculation of the stretch factors in Section~\ref{S:mono}. 
The fact that these minimal stretch factor examples appear as divide monodromies is partially explained by 
the following result. 

\begin{theorem}\label{T:stretchfactor}
Let~$S$ be a Ba'cfi-tiled surface and let~$f:S\to S$ be a pseudo-Anosov divide monodromy. 
Denote by~$S^o$ the surface obtained from~$S$ by removing the vertices of the square tiles and by~$f^o$ the induced map. 
Then~$f^o$ minimizes the normalized stretch factor in its fibred cone of the Thurston norm. 
\end{theorem}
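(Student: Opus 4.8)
The plan is to realize $f^o$ as the monodromy of a fibration of a hyperbolic $3$-manifold $M$ — namely (a finite cover of, or a Dehn filling related to) the unit tangent bundle $\U\OO$ with the cone-point fibres removed — and then appeal to the Fried–McMullen theory of the Thurston norm and the entropy function on a fibred cone. By Fried's theorem, the function assigning to each primitive integral class in the (open) fibred cone the logarithm of the stretch factor of the corresponding monodromy extends to a strictly convex, continuous, homogeneous-of-degree-$(-1)$ function on the whole cone, and it is minimized — after normalizing by the Thurston norm — exactly at the directions where the normalized entropy, equivalently McMullen's Teichmüller polynomial specialization, attains its infimum on the section of the cone cut out by the norm-$1$ sphere.

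So I want to show that the specific class $[S^o]\in H^1(M;\Z)$ carried by our divide fibre is the class that minimizes normalized entropy in its cone. The key geometric input is that the divide construction is, by Birkhoff's original argument (as recalled in the introduction and presumably spelled out in Section~\ref{S:mono}), a \emph{Birkhoff section} for the geodesic flow on $\US$ — and more is true: the black-and-white structure means the fibre is a Birkhoff section for which the first-return map of the \emph{geodesic flow} is exactly the divide monodromy. The geodesic flow is an Anosov flow, and by Fried's work on cross-sections of Anosov flows, the cohomology classes of Birkhoff sections fill out the fibred cone dual to the flow, and among all of them the \emph{canonical} cross-section — the one whose first-return is conjugate to the suspension return of the flow at unit time, i.e. the one in the "center" of the cone determined by the flow's own reparametrization — minimizes the normalized topological entropy, because that entropy equals the topological entropy of the Anosov flow itself (which is fixed) divided by the norm. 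The heart of the argument is therefore: (i) identify the fibred face $F$ of the Thurston norm ball whose cone contains $[S^o]$; (ii) show that the flow $\phi_t$ (geodesic flow on $\US$ restricted to the complement of the cone-point orbits) is transverse to \emph{every} fibre in the cone over $F$, so the whole cone consists of cross-sections of $\phi_t$; (iii) conclude via Fried that $\log\lambda(\psi)\cdot\|[\psi]\|_{\mathrm{Th}}\ge h_{\mathrm{top}}(\phi_t)$ for every integral class $[\psi]$ in the cone, with equality precisely for the class of a \emph{genuine} cross-section realizing the flow's return time, which is $[S^o]$.

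The steps, in order, are: first, carefully set up the target manifold $M=\US \smallsetminus (\text{cone-point fibres})$ and verify it is hyperbolic when $\OO$ is hyperbolic — here I can cite Fried's observation quoted in the introduction. Second, prove that $S^o$ is a cross-section of the geodesic flow and that $f^o$ is its first-return map; this is essentially Birkhoff/A'Campo/Ishikawa and should follow from the definitions in Section~\ref{S:defi} together with the cylinder-decomposition picture. Third, invoke Fried's cross-section theorem to see that the cone $\R_{>0}\cdot[S^o]$ lies in the interior of a fibred cone and that $\phi_t$ is transverse to every fibre therein. Fourth, apply the Fried/McMullen lower bound $\|\alpha\|_{\mathrm{Th}}\cdot \log\lambda(\alpha)\ge h_{\mathrm{top}}(\phi_t)$ for all $\alpha$ in the cone (this is exactly the statement that the normalized entropy is bounded below by the entropy of the flow, which is a cross-section invariant independent of which section one picks). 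Fifth, show the bound is attained at $\alpha=[S^o]$: because $S^o$ is a cross-section whose return time is (cohomologous to) the tautological $1$-form of the flow's parametrization, its return map has topological entropy exactly $h_{\mathrm{top}}(\phi_t)$ normalized by $\|[S^o]\|$, so $\log\lambda(f^o)=h_{\mathrm{top}}(\phi_t)/\|[S^o]\|_{\mathrm{Th}}$. Combining, $f^o$ minimizes normalized stretch factor on its fibred cone.

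The main obstacle I anticipate is step three together with the equality case in step five: namely, controlling that the geodesic flow is transverse to \emph{all} fibres in the cone (not merely to our distinguished fibre) — a priori a Birkhoff section is only a cross-section, not a fibration, and one must rule out that nearby cohomology classes in the cone have monodromies not coming from the flow — and then pinning down that the return time of $S^o$ is cohomologically the "right" one so that equality, not merely inequality, holds. I expect to handle transversality by the standard argument that the fibred cone of a cross-section of a flow is open and flow-adapted (Fried), and the equality case by observing that the geodesic flow, suitably reparametrized to unit return time across $S^o$, has suspension flow equal to the mapping torus of $f^o$; hence $h_{\mathrm{top}}$ of $f^o$ equals $h_{\mathrm{top}}$ of that suspension, which is $\log\lambda(f^o)$ for a pseudo-Anosov, and this is the minimum of the normalized entropy over the cone. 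Finally, one checks that "normalized stretch factor in its fibred cone of the Thurston norm" is literally $\|\cdot\|_{\mathrm{Th}}\cdot\log\lambda(\cdot)$, so minimizing it is exactly the displayed inequality becoming equality at $[S^o]$.
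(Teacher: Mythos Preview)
Your strategy has a genuine gap at steps (iv) and (v). You want an inequality
$\|\alpha\|_{\mathrm{Th}}\cdot\log\lambda(\alpha)\ge h_{\mathrm{top}}(\phi_t)$
for every integral $\alpha$ in the fibred cone, with equality precisely at $[S^o]$. But there is no such inequality, and the proposed mechanism for equality is circular. Every integral class $\alpha$ in the cone is represented by a cross-section of the \emph{same} geodesic flow $\phi_t$; nothing about ``being a Birkhoff section'' distinguishes $S^o$ from its neighbours in the cone. Abramov's formula gives $h_{\mathrm{top}}(\phi_1)=h_\mu(R_\alpha)\big/\!\int\tau_\alpha\,d\mu$ for the suitable measure, and $\int\tau_\alpha\,d\mu$ is \emph{not} the Thurston norm of $\alpha$; so $\|\alpha\|_{\mathrm{Th}}\cdot\log\lambda(\alpha)$ is not a flow invariant, and there is no tautological $1$-form argument that pins the minimiser down to $[S^o]$. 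Your observation that ``the geodesic flow, suitably reparametrized to unit return time across $S^o$, has suspension equal to the mapping torus of $f^o$'' is true but holds equally for every $\alpha$ in the cone, so it cannot single out $[S^o]$.

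The paper's argument is short and uses an entirely different idea. Fried's convexity result (which you cite) says the normalized stretch factor is strictly convex on the cone and blows up at the boundary, hence has a \emph{unique} minimiser. The paper then invokes the computation of Cossarini--Dehornoy~\cite{Intersection}, which identifies a section of this fibred cone with the dual unit ball of the intersection norm of $\gamma$ on $\OO$: this section is centrally symmetric, and its centre of symmetry is the class $[S^o]$. A strictly convex function with a unique minimum on a centrally symmetric domain must attain that minimum at the centre. That is the whole proof; the geodesic-flow entropy never enters.
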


We can also use the geometric properties of~$\gamma$ for deriving properties of the divide monodromy. 
In this direction, the following result gives an upper bound on the stretch factor in terms of  the maximal diameter of the regions in the complement of~$\gamma$ in the orbifold. 

\begin{theorem}\label{T:diameter}
Given a hyperbolic good 2-orbifold~$\OO$ and a geodesic divide~$\gamma$ on~$\OO$ such that all regions of~$\OO\setminus\gamma$ have diameter at most~$D$, the stretch factor of the associated divide monodromy is bounded by~$e^{2D}$.
\end{theorem}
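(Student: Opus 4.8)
The plan is to exploit the geodesic-flow description recalled in the introduction. Since $\OO$ is hyperbolic and $\gamma$ is geodesic, the fibre $\Sn\subset\U\OO$ is a Birkhoff section for the geodesic flow $(g_t)$ on $\U\OO$; deleting the finitely many periodic orbits lying over the double points of $\gamma$ (equivalently, passing to $S^o$ and $f^o$ as in Theorem~\ref{T:stretchfactor}) turns it into a genuine cross-section, and $f^o$ is then the first-return map of $(g_t)$ to $S^o$, with some first-return-time function $\tau$. By Fried's theorem~\cite{Fried}, $f^o$ is pseudo-Anosov, and the topological entropy of a pseudo-Anosov map is the logarithm of its stretch factor, so $\log\lambda=h_{\mathrm{top}}(f^o)$. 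Thus it suffices to prove $h_{\mathrm{top}}(f^o)\le 2D$, and the core estimate is the pointwise bound $\tau\le 2D$.

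For the latter, take $v\in S^o$, based at a smooth point $x$ of $\gamma$ and pointing into a black region. The geodesic it generates leaves this region across $\gamma$ at a first point $y$; since adjacent regions of $\OO\setminus\gamma$ carry opposite colours, the region entered at $y$ is white, so the flowed vector at $y$ is not in $\Sn$. Crossing this white region, the geodesic re-enters a black region at the next intersection point $z$ with $\gamma$, and there the flowed vector does lie in $\Sn$. Hence $\tau(v)=d_\OO(x,y)+d_\OO(y,z)$, a sum of lengths of two geodesic segments, each contained in the closure of a single region $R$ of $\OO\setminus\gamma$ and meeting $\partial R$ only at its endpoints. As $\gamma$ is geodesic, its double points are transverse intersections of geodesic arcs, so every corner angle of $R$ is less than $\pi$; together with the fact that the cone angles of the orientable hyperbolic orbifold $\OO$ are all at most $\pi$, this makes $R$ geodesically convex, so that a geodesic segment crossing $R$ realises the distance between its endpoints and hence has length at most $\operatorname{diam}(R)\le D$. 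Therefore $\tau(v)\le 2D$.

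Converting this into a bound on entropy is formal. With the curvature of $\OO$ normalised to $-1$, the geodesic flow on $\U\OO$ has topological entropy $1$: this is classical for closed hyperbolic surfaces, passes to good orbifolds via a finite manifold cover, and equals the volume entropy of $\Hy$. By Abramov's formula, each $f^o$-invariant probability measure $\nu$ corresponds to a $(g_t)$-invariant measure $\hat\nu$ on $\U\OO$ with $h_\nu(f^o)=h_{\hat\nu}(g_\bullet)\cdot\int\tau\,d\nu$, and since $h_{\hat\nu}(g_\bullet)\le h_{\mathrm{top}}(g_\bullet)=1$ and $\int\tau\,d\nu\le\sup\tau\le 2D$, we get $h_\nu(f^o)\le 2D$; the variational principle then yields $h_{\mathrm{top}}(f^o)\le 2D$, i.e.\ $\lambda\le e^{2D}$. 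One may also argue directly: on $\Sn$ the unstable foliation of $f^o$ is the trace of the unstable horocyclic foliation of $(g_t)$, whose leafwise measure $g_t$ scales by exactly $e^t$, so flowing for time at most $2D$ scales the associated transverse measure by at most $e^{2D}$ at every point, and a pseudo-Anosov whose unstable transverse measure is scaled pointwise by at most $e^{2D}$ has stretch factor at most $e^{2D}$.

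The step I expect to resist the most is the claim that a geodesic segment crossing a region $R$ realises the distance between its endpoints once $\OO$ has cone points in the interior of $R$, as in Figure~\ref{F:Intro}: a cone point carries positive curvature, so a priori a flow-line segment could wind past it and be as long as $2\operatorname{diam}(R)$, which would only give $e^{4D}$. Recovering the sharp bound requires showing that such a segment, being a genuine geodesic flow line confined to $\overline R$, cannot wind in this way — for instance by developing $\overline R$ into $\Hy$ and using that, the cone angles being at most $\pi$, small disks around the cone points and the corners of $R$ are convex, so that a locally minimising arc inside $\overline R$ stays globally minimising. With this geometric input in place, the rest of the argument is routine.
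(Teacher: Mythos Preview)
Your argument is correct and follows the same route as the paper's own proof: the monodromy is the first-return map of the geodesic flow, the flow has expansion rate~$1$, and the first-return time is bounded by~$2D$ because a trajectory crosses one black and one white face before returning; you simply supply more detail (the Abramov/variational-principle step and the alternative direct expansion argument) where the paper's three-line proof is terse. Your flagged worry about cone points inside a face is a genuine subtlety that the paper's proof glosses over as well, so it is not a defect of your approach relative to the paper, and your sketch of a resolution via convexity in the developed picture is along the right lines.
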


This theorem in particular implies the existence of divide monodromies whose stretch factors tend to~1, compare with Corollary~\ref{T:comparaison}.
In turn, this proves the existence infinitely many examples of divide monodromies not obtained by Penner's or Thurston's constructions of 
pseudo-Anosov mapping classes, see Corollary~\ref{T:noThurstonPenner}. This question of comparison is very natural. Indeed, A'Campo's description of divide monodromies 
as a product of three multitwists and our description of divide monodromies as a product of two antitwists is reminiscent of Thurston's construction of 
pseudo-Anosov mapping classes. Using algebraic criteria due to Shin and Strenner~\cite{SS} and Hubert and Lanneau~\cite{HL}, we prove the following stronger result.

\begin{theorem}\label{T:noPowerThurstonPenner}
There exists a divide monodromy~$f$ so that no power~$f^k$ is obtained from Thurston's or Penner's construction, where~$k\ge1$.
\end{theorem}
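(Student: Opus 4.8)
\emph{Plan of proof.} The plan is to exhibit one explicit divide~$\gamma$ on a hyperbolic good $2$-orbifold~$\OO$, with associated divide monodromy~$f$ which is pseudo-Anosov (as $\gamma$ can be taken geodesic on a hyperbolic orbifold, cf.~\cite{Fried}), and to show that the stretch factor~$\lambda$ of~$f$ has a minimal polynomial~$p\in\Z[x]$ with two properties: \emph{(i)}~$p$ has a root on the unit circle, and \emph{(ii)}~$p$ has a non-real root~$\mu$ with~$|\mu|\neq1$ and~$\arg(\mu)/\pi\notin\Q$. Granting this, the theorem follows formally. Every power~$f^k$ is again pseudo-Anosov, with stretch factor~$\lambda(f^k)=\lambda^k$, and every $k$-th power of a Galois conjugate of~$\lambda$ is a Galois conjugate of~$\lambda^k$. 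We use two algebraic criteria: if a pseudo-Anosov map~$\psi$ arises from Thurston's construction then, by~\cite{HL,SS}, the algebraic integer~$\lambda(\psi)+\lambda(\psi)^{-1}$ is totally real; and if~$\psi$ arises from Penner's construction then, by~\cite{SS}, none of the Galois conjugates of~$\lambda(\psi)$ lies on the unit circle.

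Fix a root~$\nu$ of~$p$ on the unit circle and a non-real root~$\mu$ with~$|\mu|\neq1$ and~$\arg(\mu)/\pi\notin\Q$. Since~$p$ is irreducible with the real root~$\lambda>1$, it is not cyclotomic, so no root of~$p$ is a root of unity; hence for every~$k\ge1$ the number~$\nu^k$ still lies on the unit circle, so~$\lambda^k=\lambda(f^k)$ has a Galois conjugate on the unit circle and~$f^k$ is not obtained from Penner's construction. On the other hand,~$\arg(\mu)/\pi\notin\Q$ forces~$\mu^k\notin\R$ for every~$k\ge1$ --- otherwise~$\mu/\bar\mu$ would be a $k$-th root of unity --- while~$|\mu^k|\neq1$; therefore~$\mu^k+\mu^{-k}$ is a non-real Galois conjugate of~$\lambda^k+\lambda^{-k}=\lambda(f^k)+\lambda(f^k)^{-1}$, so the latter is not totally real and~$f^k$ is not obtained from Thurston's construction. (The case~$k=1$ recovers Corollary~\ref{T:noThurstonPenner}.)

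It remains to produce a concrete~$\gamma$ and to verify \emph{(i)}--\emph{(ii)}. From the Ba'cfi-tiled model of~$\Sn$ and the antitwist factorization of Theorem~\ref{T:antitwist} one obtains an explicit integer matrix (a transition matrix of an invariant train track for~$f^o$, or the matrix of the homological action of~$f^o$ on an orientation double cover of the invariant foliation), whose characteristic polynomial we factor to read off the minimal polynomial~$p$ of~$\lambda$. That~$p$ has a root on the unit circle and a non-real root off it --- in particular~$\lambda$ is not a Salem number --- is then a finite check on its roots. For the irrationality of~$\arg(\mu)/\pi$ in~\emph{(ii)}: if~$\mu/\bar\mu$ were a root of unity of order~$m$, then~$\Q(\zeta_m)$ would lie in the splitting field of~$p$, so~$\varphi(m)$ would divide the order of its Galois group; only finitely many~$m$ qualify, and for each of them one checks directly on~$p$ that~$\mu^m\notin\R$.

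The genuine obstacle is step~\emph{(ii)}: choosing~$\OO$ and~$\gamma$ so that the relevant complex Galois conjugate of~$\lambda$ \emph{provably} has an argument which is an irrational multiple of~$\pi$, rather than merely seeming to. It is convenient to arrange that~$p$ has a small splitting field, so that only roots of unity of small order can occur in it; this both shortens the verification just described and keeps the underlying homological computation manageable. Carrying out that computation along the cylinder decomposition furnished by Theorem~\ref{T:antitwist} is the remaining, routine, work.
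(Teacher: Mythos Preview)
Your plan matches the paper's strategy almost exactly: pick one explicit divide monodromy, compute the minimal polynomial~$p$ of its stretch factor~$\lambda$, observe that~$p$ has roots both on~$\Sph^1$ and in~$\C\setminus(\R\cup\Sph^1)$, and then invoke the criteria of Hubert--Lanneau~\cite{HL} and Shin--Strenner~\cite{SS} to exclude all powers from Thurston's and from Penner's construction respectively. The paper's concrete example is the even ping-pong with~$11$ squares on the~$(2,6,16)$-orbifold (Example~5 with~$n=3$, $q=6$, $r=16$), for which the homological computation outlined in Section~\ref{S:mono} gives
\[
p(x)=x^{12} - 4x^{11} - 4x^{10} - 4x^9 - 4x^8 - 4x^7 - 14x^6 - 4x^5 - 4x^4 - 4x^3 - 4x^2 - 4x + 1,
\]
with four real roots, four on~$\Sph^1$, and four outside~$\R\cup\Sph^1$; so your ``routine work'' is already done there.

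The one substantive divergence is how you pass to powers in condition~\emph{(ii)}. You propose to verify directly that~$\arg(\mu)/\pi\notin\Q$ by bounding the orders~$m$ with~$\varphi(m)$ dividing~$|\mathrm{Gal}(p)|$ and checking each. This is correct but unnecessarily laborious. The paper instead appeals to a lemma of Strenner~\cite{Strenner} valid for \emph{any} pseudo-Anosov stretch factor: $\Q(\lambda)=\Q(\lambda^k)$ for all~$k\ge1$, so distinct Galois conjugates of~$\lambda$ have distinct~$k$-th powers. Since~$\mu\notin\R$ gives~$\mu\neq\bar\mu$, one gets~$\mu^k\neq\bar\mu^k$, i.e.~$\mu^k\notin\R$, with no case analysis and no need to compute or bound the Galois group. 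This shortcut is worth knowing: it turns your finite check into a one-line argument and removes the only part of your outline that is example-dependent beyond locating the roots of~$p$.
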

 
This result seems notable from the point of view of pseudo-Anosov theory, as there exist only few explicit combinatorial constructions of pseudo-Anosov mapping classes,
and the constructions of Penner and Thurston are possibly the most accessible ones. 
By exhibiting an explicit train track in a special case of Ba'cfi-tiled surfaces, we can give a new combinatorial construction of pseudo-Anosov maps in terms of 
a product of two antitwists. 

\begin{theorem}[Ba'cfi-construction of pseudo-Anosov mapping classes]
\label{Ba'cfi-pa_thm}
Let~$S$ be a Ba'cfi-tiled surface so that all cylinders have width at least 5. Furthermore, let~$\tau_h$ a left-veering horizontal antitwist and let~$\tau_v$ be a right-veering vertical antitwist. Then~$\tau_v\circ\tau_h$ is pseudo-Anosov with stretch factor at least~$\frac{5}{2}$.
\end{theorem}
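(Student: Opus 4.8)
The plan is to build an explicit bigon train track $\tau$ on $S$ carried by the union of the horizontal and vertical cylinders, and to show that $\tau_v\circ\tau_h$ acts on the weights of $\tau$ by a Perron--Frobenius matrix whose spectral radius is at least $\tfrac52$. Since $S$ is Ba'cfi-tiled, each horizontal cylinder is subdivided into square tiles whose vertical sides are glued to the vertical cylinders, so the core curves of the cylinders of one direction intersect each tile in a single arc; take for $\tau$ the standard $L$-shaped (or "diagonal") track in each tile, with one branch following the horizontal direction and one following the vertical direction, switches placed so that $\tau$ fills $S^o$ and has no complementary monogon or bigon (here the width-$\ge 5$ hypothesis is used to guarantee enough tiles to close up the switches without creating bad complementary regions). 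The key point is that a left-veering horizontal antitwist, acting tile-by-tile as a transvection composed with a horizontal reflection (as described after Theorem~\ref{T:antitwist}), carries $\tau$ and, on weights, adds to each vertical weight a positive multiple --- at least the cylinder width, hence $\ge 5$ --- of the relevant horizontal weights; symmetrically for $\tau_v$ on the horizontal weights.

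The main steps, in order, are: (1) describe $\tau$ tile-by-tile and check it is a filling, recurrent, transversely recurrent bigon track on $S^o$; (2) compute the transition matrices $M_h$ and $M_v$ of $\tau_h$ and $\tau_v$ on the weight space of $\tau$, observing that each is of the block form $\left(\begin{smallmatrix} I & 0 \\ W & I\end{smallmatrix}\right)$ or $\left(\begin{smallmatrix} I & W' \\ 0 & I\end{smallmatrix}\right)$ where $W,W'$ have nonnegative integer entries and each row sum is at least the corresponding cylinder width $\ge 5$; (3) conclude that $M:=M_vM_h=\left(\begin{smallmatrix} I+W'W & W' \\ W & I\end{smallmatrix}\right)$ is a nonnegative matrix that is eventually positive (primitivity follows from the filling property of $\tau$, i.e. every cylinder in one direction meets, through a chain of tiles, some cylinder in the other direction); (4) invoke the standard criterion (as in Penner--Harer, or the version used by Penner for his construction) that an orientation-reversing-free, filling, recurrent track carried by itself under a homeomorphism whose action on weights is Perron--Frobenius forces the map to be pseudo-Anosov with stretch factor the Perron--Frobenius eigenvalue $\lambda(M)$; (5) bound $\lambda(M)$ from below: since $W$ has a row with sum $\ge 5$ and $W'$ likewise, a direct estimate --- e.g. testing $M$ against the all-ones vector restricted appropriately, or using that $\lambda(M)^2\ge\lambda(W'W+ I)$ and $W'W$ has an entry $\ge 5$ along a tile-chain closing a loop --- yields $\lambda(M)\ge \tfrac{5}{2}$; the factor $\tfrac52$ rather than $5$ comes from the square root in passing from $M$ (a product of two antitwists) to the "per-antitwist" growth, or equivalently from the worst case where the relevant loop uses two tiles.

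The hard part will be step (1) together with the precise form of step (2): one must verify that the naive tile-by-tile track actually closes up into a genuine bigon track with no complementary monogons, nullgons, or once-punctured monogons on $S^o$ --- this is exactly where the width-$\ge 5$ hypothesis enters, and where the combinatorics of the Ba'cfi gluing (the fact that the $(p,q)=(1,1)$ return property makes the vertical sides of tiles close up into honest cylinders) must be used carefully. A secondary subtlety is ensuring the transition matrix is genuinely primitive and not merely irreducible-with-period, which again follows from $\tau$ filling $S$ but needs the explicit description of how horizontal and vertical cylinders interleave. Once the train track and its transition matrix are in hand, steps (3)--(5) are routine linear algebra and an application of the Perron--Frobenius criterion for pseudo-Anosov maps.
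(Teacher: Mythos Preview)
Your overall strategy --- build an invariant train track, show the induced action on the cone of admissible weights is nonnegative, and bound the dominant eigenvalue from below --- is exactly what the paper does. The gap is in the implementation. The block form $\left(\begin{smallmatrix} I & 0 \\ W & I\end{smallmatrix}\right)$ you write down is the transition matrix of a \emph{multitwist} in Thurston's construction, not of an antitwist. An antitwist is orientation-reversing on each cylinder (on every tile it is a transvection composed with a reflection along a horizontal axis), so a single $\tau_h$ does not carry the naive horizontal--vertical track to itself: the reflection flips the tangency at each switch and sends your track to the one with the opposite smoothing. Consequently $M_h$ and $M_v$ are not well-defined as self-maps of a single weight cone; even if you repair this by shuttling between two mirror tracks, the diagonal blocks become nontrivial permutations rather than identities, and the link between your row-sum estimate and the number $\tfrac52$ remains unexplained (note also that the shear of the basic antitwist on a width-$w$ cylinder is $w-2$, not $w$).

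The paper's track is genuinely different from the one you sketch: it has three colours of branches (red, green, blue), built from the edges and corners of the divide rather than from the cylinder cores, and the two half-return maps $\fn$, $\fb$ carry this track on $\Sn$ to the brother track on $\Sb$ and back. The width-$\ge5$ hypothesis enters not to exclude complementary monogons but because for $3$- or $4$-gons the half-return map fails to respect the branch orientations and the induced train-track map is ill-defined. The bound $\tfrac52$ then comes from an explicit test vector $w$ (weight $2$ on green, $1$ on red and blue): from the transition rules green$\to$blue, blue$\to\,\ge\!1$\,red\,$+\,2$\,green, red$\to\,\ge\!3$\,green\,$+\,2$\,red, applied once for each half-return, one checks directly that $Mw\ge\tfrac52 w$ componentwise, and Perron--Frobenius finishes the argument.
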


The rest of this paper is organized as follows: 
Sections 2 to 5  are devoted to the definitions and to the proof of Theorems~\ref{T:antitwist} and~\ref{T:converse}. 
Section~\ref{S:stretch factor} contains our results on the stretch factors of pseudo-Anosov divide monodromies and the comparison with the constructions 
of Penner and Thurston. 
Section~\ref{S:TrainTrack} describes an {invariant train track} for the divide monodromy
 in the restricted situation where all the complementary regions have at least 5 sides. This leads to our Ba'cfi-construction of pseudo-Anosov mapping classes.
Finally, Section~\ref{S:Questions} contains some open questions on divide monodromies.

\medskip 
\noindent{\bf Acknowledments.} We thank Mario Shannon for suggesting Theorem~\ref{T:converse} and Erwan Lanneau for several discussions.



\section{Definitions}\label{S:defi}

\subsection{Divide surfaces in the unit tangent bundle}
First we recall the construction of Birkhoff-A'Campo-Fried-Ishikawa of specific surfaces in the unit tangent bundle to a surface~\cite[p.277]{Birkhoff}.

It starts with a compact Riemannian surface with empty boundary. 
(Actually, A'Campo works with the disc which has boundary, but here it is easier to assume the boundary being empty.)
It can be generalized to a {Riemannian 2-dimensional orbifold}, that is, a compact Riemannian surface with a finite number of {cone points}~$c_1, \dots, c_k$ where the total angle is~$2\pi/p_i$ for some integers $p_i$ called the orders of the points~$c_i$.
When the orbifolds are of the type~$\Hy^2/\Gamma$ for some Fuchsian group~$\Gamma$, they are called {hyperbolic good orbifolds}~\cite{ThurstonNotes}. 
In this case the unit tangent bundle~$\U\OO$ can be identified with~$\U\Hy^2/\Gamma$.

Given an orbifold~$\OO$, a \term{divide} on~$\OO$ is a finite collection of curves in general position such that the complement~$\OO\setminus\gamma$ is the union of discs that may be black-and-white colored. 
We always assume that the complement is indeed black-and-white colored.
Also $\gamma$ is not allowed to pass through cone points, except for cone points of order~$2$ which~$\gamma$ is allowed to visit exactly twice per cone point (in this case both branches of ~$\gamma$ make a U-turn at the cone point). 

\begin{definition}\label{D:dividesurface}
Given an orientable Riemannian 2-orbifold~$\OO$ and a divide~$\gamma$ on~$\OO$, the \term{divide link}~$\U\gamma$ is the set of unit vectors tangent to~$\gamma$. 
The \term{divide surface}~$\Sn$ is the subset of~$\U\OO$ consisting of those tangent vectors based on~$\gamma$ and pointing toward the black regions, plus all tangent vectors based at the double points of~$\gamma$. 
\end{definition}

\begin{remark}The Riemannian structure on~$\OO$ is used for defining the unit tangent bundle. But the description of the surface~$\Sn$ and the monodromy map~$f_\gamma$ we give later shows that they only depend on the topological type of~$\OO$ and the relative position of~$\gamma$ in~$\OO$ with respect to the cone points.
\end{remark}

The boundary~$\partial\Sn$ is the link~$\U\gamma$. 
Usually, one smooths~$\Sn$ around the fibres of the double points by pushing it a little forward for those vectors pointing toward black regions and by pulling it back a little for those vectors pointing toward the white regions. 

\begin{figure}[h]
\begin{center}
\includegraphics[width=.5\textwidth]{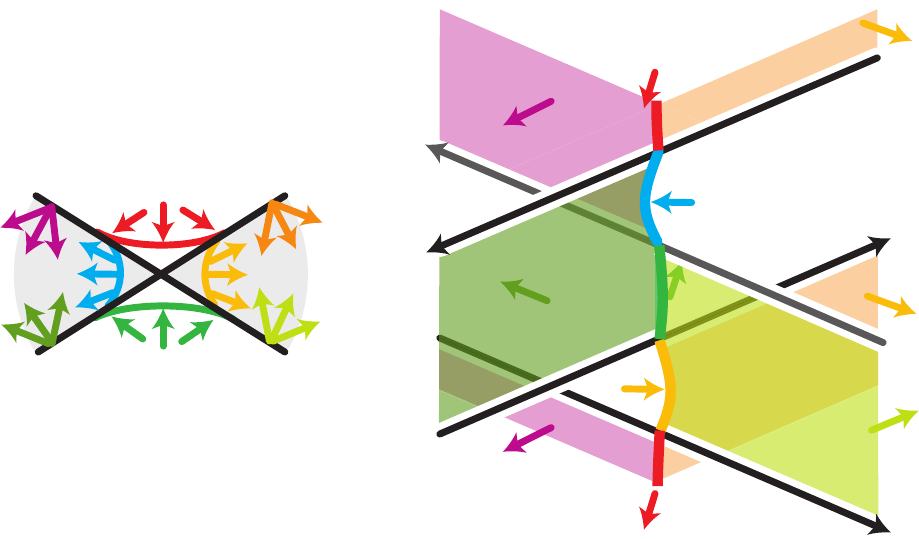}
\end{center}
\caption{The smoothed surface~$\Sn$ around a double point of the divide. Observe that the left picture is invariant by an order 2 rotation, and the right picture is invariant by an order 2 screw motion.}
\label{smoothedfibre}
\end{figure}

If $\gamma$ has $c$ components avoiding the cone points of order 2, and $d$ components visiting some cone point of order 2, then $\U\gamma$ has $2c+d$ components, so $\Sn$ is a surface with $2c+d$ boundary components.

The link~$\U\gamma$ is always fibred~\cite{Ishikawa} with possible fibre~$\Sn$. 
We say \emph{possible} since the fibration is in general not unique. 
However, the fibration that has~$\Sn$ as a fibre is the only one which we consider here.

\begin{definition}\label{D:dividemap}
The {monodromy map}~$f_\gamma:\Sn\to \Sn$ is called the \term{divide monodromy} associated to~$\gamma$.
\end{definition}

If the divide $\gamma$ is geodesic, then the surface~$\Sn$ is a Birkhoff section for the geodesic flow in~$\U\OO$. 
When~$\OO$ is negatively curved, the geodesic flow on~$\U\OO$ is of Anosov type. 
Fried remarked~\cite{Fried} that the weak foliations~$\mathcal F^{u}$ and $\mathcal F^{s}$ are transverse to~$S_\gamma$, 
and hence induce foliations~$\mathcal F^{u}\cap \Sn$ and~$\mathcal F^{s}\cap \Sn$ of the interior of~$\Sn$ that are~$f_\gamma$-invariant. 
Thus, in this case, $f_\gamma$ is a pseudo-Anosov map, with singularities on the boundary only. 

\subsection{Square tiles and Ba'cfi-tiled surfaces}
We now give a combinatorial model of divide monodromies.

A \term{square-tiled surface} is a surface~$S$ obtained from a finite number of copies of the unit square whose sides are identified by translations in the plane. 
We denote by~$Q(S)$ the set of squares composing~$S$.
Given a square $q$ in $Q(S)$, it makes sense to speak of the four cardinal directions, so that there are $N, S, W,$ and $E$\term{-neighbors}.
One can then describe a sequence of adjacent squares by giving a starting point and a sequence of directions. 
For example, we write $NE(q)$ for the north neighbor of the east neighbor of $q$.

\begin{definition}\label{D:Ba'cfitiled}
A square-tiled surface~$S$ is \term{Ba'cfi-tiled} if, for every $q$ in~$Q(S)$, one has $N\!EN\!E(q)=q$.
\end{definition}

The condition is equivalent to $EN\!EN(q)=q$ for every square~$q$, or to $SW\!SW(q)=q$, or to $W\!SW\!S(q)=q$.

A \term{horizontal cylinder} in a square-tiled surface~$S$ is a set of the form $q, E(q), \dots E^{w(q)-1}(q)$ with $E^{w(q)}(q)=q$; it is the union of a finite number $w(q)$ of squares. 
We denote by $\gamma_h(q)$ the core of the horizontal cylinder through~$q$, that is, the image of the horizontal line passing through the center of~$q$.
One similarly defines \term{vertical cylinders} and the curves $\gamma_v(q)$.
For a square-tiled surface~$S$ and a square~$q$ in~$S$, it makes sense to speak of its $N, S, W,$ and $E$-sides, denoted by~$e^N(q), e^S(q), e^W(q),$ and $e^E(q)$, respectively. 

We then define $\tau_h(q)$ as the map sending $e^S(q)$ on $e^N(E(q))$, $e^N(q)$ on $e^S(W(q))$, and the vertical sides of $q$ on segments of slope $(w(q)-2,-1)$ connecting the extremities of $e^N(E(q))$ and~$e^S(W(q))$. 
Thus $\tau_h(q)$ is a parallelogram in~$S$ with two horizontal sides and two sides of slope~$(w(q)-2,-1)$.
Note that $\tau_h(q)$ is included in the horizontal cylinder associated to~$q$, see Figure~\ref{F:HorizontalAntitwist}.

For $n$ an integer, we denote by $\tau^{(n)}_h(q)$ the image of $\tau_h(q)$ under the $n{-}1$st power of a left-veering Dehn twist about~$\gamma_h(q)$. 
In Figure~\ref{F:Intro}, $\tau^{(2)}_h(q_i)$ is shaded for two square tiles~$q_i$ in the top horizontal cylinder, as well as $\tau^{(1)}_h(q_j)$ for two square tiles~$q_j$ in the bottom horizontal cylinder.

Since $\tau^{(n)}_h(q)$ does not have the same horizontal sides as~$q$, it is in general hard to apply such transformations to all squares of a square-tiled surface in a coherent way. 
However, for Ba'cfi-tiled surfaces this is possible.

\begin{lemmadefinition}\label{D:antitwist}(Compare with Figure~\ref{F:HorizontalAntitwist})
Let $S$ be a Ba'cfi-tiled surface, and let $\bar n:Q(S)\to\Z$ be a function that is constant on the horizontal cylinders. 
Then the parallellograms $\{\tau^{(\bar n(q))}_h(q)\}_{q\in Q(S)}$ tile~$S$. 
Therefore the maps~$\{\tau^{(\bar n(q))}_h\}_{q\in Q(S)}$ can be glued into a well-defined map~$\tau^{(\bar n)}_h:S\to S$.
Such a map is called a \term{horizontal antitwist}. 
If the function~$\bar n$ takes values in~$\Z_{\ge 1}$ one says that $\tau^{(\bar n)}_h$ is \term{left-veering}.
\end{lemmadefinition}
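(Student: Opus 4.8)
The plan is to verify two things: first, that the parallelograms $\{\tau^{(\bar n(q))}_h(q)\}_{q\in Q(S)}$ cover $S$ without overlaps, and second, that the individual maps agree on overlapping edges so they glue to a continuous map. I would begin by fixing a single horizontal cylinder $C$ made of squares $q, E(q),\dots,E^{w-1}(q)$ with $w=w(q)$, and set $n=\bar n(q)$, which is constant on $C$ by hypothesis. Within $C$, the parallelogram $\tau_h(E^i(q))$ has horizontal sides $e^S(E^i(q))$ replaced by $e^N(E^{i+1}(q))$ along the bottom and $e^N(E^i(q))$ replaced by $e^S(E^{i-1}(q))$ along the top, with slanted sides of slope $(w-2,-1)$. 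So within one cylinder the base parallelograms $\tau_h(E^i(q))$, as $i$ ranges over $\Z/w$, already tile $C$: this is a purely two-dimensional picture inside a single flat cylinder, and one checks that the slanted sides of consecutive parallelograms match (the right slanted side of $\tau_h(E^i(q))$ is the left slanted side of $\tau_h(E^{i+1}(q))$), while the horizontal sides sweep out the whole cylinder exactly once. Applying the $(n-1)$st power of the left-veering Dehn twist about $\gamma_h(q)$ is a homeomorphism of $C$ preserving this tiling property, so $\{\tau^{(n)}_h(E^i(q))\}_i$ also tiles $C$.

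The crux — and the only place the Ba'cfi condition is actually used — is checking that the pieces match across the horizontal sides of the squares, i.e.\ where two different cylinders meet. I would argue as follows. The top edge $e^N(q)$ of a square $q$ is, on one hand, the image of $e^N(q)$ under $\tau_h(W(q))$ (it is the edge $e^S(W\!(W(q))\cdot\ldots)$ — more precisely, $\tau_h(E(q'))$ sends $e^N(E(q'))$ to $e^S(q')$, so with $q' = W(q)$ we get that $\tau_h(q)$ uses $e^N(q)$ sent to $e^S(W(q))$, and $\tau_h(E(q))$... ). Rather than track indices loosely, the clean statement is: the edge $e^N(q)$ appears as the top horizontal side of the parallelogram $\tau_h(q)$ relocated, and as the bottom horizontal side of the parallelogram associated to the square lying immediately above $q$, which is $N(q)$; so I need $\tau_h(q)$ and $\tau_h(N(q))$ to induce the same parametrization of $e^N(q)$ up to the Dehn twist powers. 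Unwinding the definition, $\tau_h(q)$ maps $e^S(q)\to e^N(E(q))$, and one square up the analogous map sends the $S$-side of $N(q)$, namely $e^S(N(q)) = e^N(q)$, to $e^N(E(N(q))) = e^N(N\!E(q))$. Matching these forces a compatibility between $E(N(q))$ travelled from $q$ and $N(E(q))$, and iterating the edge identifications around the corner of four squares is exactly the closed-loop condition $N\!EN\!E(q)=q$. This is the step I expect to be the main obstacle: setting up notation so that the four edges around each vertex of the square tiling are seen to be consistently identified, and confirming that left-veering twist powers (being supported in the interiors of the cylinders and equal to the identity near the horizontal boundary circles $\gamma_h$) do not disturb the gluing along these horizontal edges.

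Once the tiling and matching are established, gluing the maps $\{\tau^{(\bar n(q))}_h\}$ into $\tau^{(\bar n)}_h:S\to S$ is formal: each $\tau^{(\bar n(q))}_h$ is a homeomorphism from the closed square $\overline{q}$ onto the closed parallelogram $\tau^{(\bar n(q))}_h(q)$; the source squares tile $S$ by definition of a square-tiled surface and the target parallelograms tile $S$ by the previous two paragraphs; and the maps agree on the common boundary edges, so by the pasting lemma they assemble into a continuous, hence (by invariance of domain, or by exhibiting the analogous inverse construction using $SW\!SW$) homeomorphic, self-map of $S$. Finally, the clause about left-veering is just a definitional remark: when $\bar n$ takes values in $\Z_{\ge1}$ one \emph{declares} $\tau^{(\bar n)}_h$ left-veering, and nothing needs to be proved, except perhaps to observe that the case $\bar n\equiv 1$ recovers the un-twisted antitwist and that increasing $\bar n$ composes with positive powers of the Dehn multitwist along all the horizontal cores simultaneously — which is well-defined precisely because $\bar n$ is constant on cylinders, so no two horizontal cores receive conflicting twisting data.
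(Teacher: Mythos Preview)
Your approach is essentially the paper's: first tile each horizontal cylinder separately (using that $\bar n$ is constant on it), then use the Ba'cfi relation to see that the maps agree on the shared horizontal edges between adjacent cylinders, so everything glues. One notational slip in your edge-chase: you write $e^N(E(N(q)))=e^N(NE(q))$, but in the paper's convention $NE(q)=N(E(q))$, so the square in question is $EN(q)$, not $NE(q)$; the clean verification is that $\tau_h(q)$ sends $e^N(q)$ to $e^S(W(q))=e^N(SW(q))$ while $\tau_h(N(q))$ sends the same edge to $e^N(EN(q))$, and the Ba'cfi condition $ENEN=\mathrm{id}$ gives exactly $EN=SW$.
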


\begin{figure}[hbt]
\begin{picture}(73,27)(0,0)
\put(0,0){\includegraphics[width=.42\textwidth]{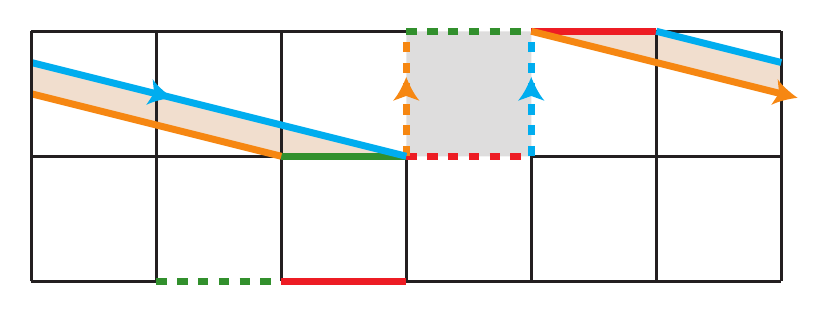}}
\put(7,-1){$3$}
\put(18,-1){$4$}
\put(29,-1){$5$}
\put(41,-1){$6$}
\put(52,-1){$1$}
\put(63,-1){$2$}
\put(7,26.5){$1$}
\put(18,26.5){$2$}
\put(29,26.5){$3$}
\put(41,26.5){$4$}
\put(52,26.5){$5$}
\put(63,26.5){$6$}
\end{picture}
\caption{A Ba'cfi-tiled surface with 12 square tiles. The identification of the horizontal sides is indicated by the numbers, and the vertical sides are identified by the obvious horizontal translation. 
A top-center square tile and its image under a left-veering antitwist~$\tau^{(1)}_h$ on the top horizontal cylinder are shown.}
\label{F:HorizontalAntitwist}
\end{figure}

\begin{proof}
Let $C$ be the set of squares that compose an arbitrary horizontal cylinder. 
Since the function~$\bar n$ is constant on $C$, the parallellograms~ $\{\tau^{(\bar n(q))}_h(q)\}_{q\in C}$ tile~$C$. 
Moreover the Ba'cfi-character of~$S$ ensures that every horizontal side is sent on the same side by the maps induced by the two adjacent squares. 
Hence, the maps on all horizontal cylinders are coherent and the parallelograms~$\{\tau^{(\bar n(q))}_h(q)\}_{q\in Q(S)}$ tile~$S$.
\end{proof}

The term left-veering refers to the segment transverse to a horizontal cylinder being sent on a segment that is inclined to the left when looking from the boundary.
One similarly defines \term{vertical antitwists}. 
On Figure~\ref{F:VerticalAntitwist} we show an example of a \term{right-veering} vertical antitwist. 

\begin{figure}[hbt]
\begin{picture}(73,27)(0,0)
\put(0,0){\includegraphics[width=.42\textwidth]{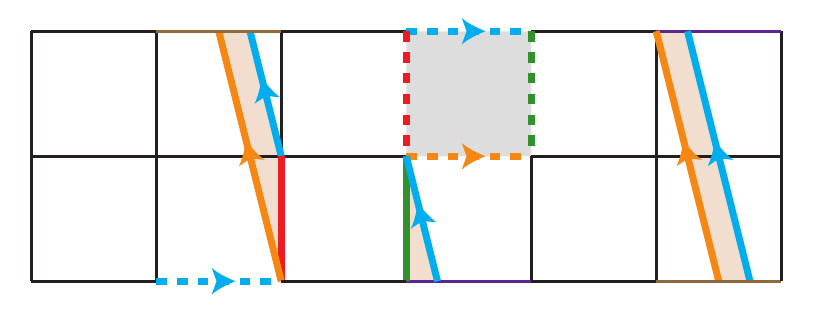}}
\put(7,-1){$3$}
\put(18,-1){$4$}
\put(29,-1){$5$}
\put(41,-1){$6$}
\put(52,-1){$1$}
\put(63,-1){$2$}
\put(7,26.5){$1$}
\put(18,26.5){$2$}
\put(29,26.5){$3$}
\put(41,26.5){$4$}
\put(52,26.5){$5$}
\put(63,26.5){$6$}
\end{picture}
\caption{The same Ba'cfi-tiled surface as in Figure~\ref{F:HorizontalAntitwist}, and the image of the top-center square tile under a right-veering antitwist~$\tau^{(1)}_v$ along the corresponding vertical cylinder.}
\label{F:VerticalAntitwist}
\end{figure}


\bigskip
\section{The surface~$\Sn$ as a square-tiled surface}

\subsection{The surface~$\Sn$ when $\gamma$ does not pass through an order~2 cone point.}
We start from an orientable 2-orbifold~$\OO$ and a geodesic divide~$\gamma$ avoiding the order 2 points of~$\OO$. 
We consider~$\gamma$ as a graph embedded in~$\OO$ whose vertices are the double points of~$\gamma$ and whose edges are simple arcs connecting double points. 
For every edge~$e$, there is an associated rectangular ribbon~$r^\bullet_e$ in~$\Sn$. 
The boundary of~$r^\bullet_e$ can be decomposed into six parts, namely 
\begin{itemize}
\item two horizontal sides that correspond to the two oriented lifts of~$e$ in~$\U\OO$,
\item two pairs of two vertical sides, each pair consisting of those tangent vectors at one extremity of~$e$ pointing towards an adjacent black and an adjacent white face, respectively, 
see Figure~\ref{buildsquares}.
\end{itemize}
If we contract the horizontal sides of~$\Sn$, we are left with a square that can be oriented so that its horizontal sides correspond to the tangent vectors based at double points and pointing toward the white faces, while the vertical sides correspond to the tangent vectors pointing toward the black faces. 

\begin{figure}[h]
\begin{center}
\includegraphics[width=.8\textwidth]{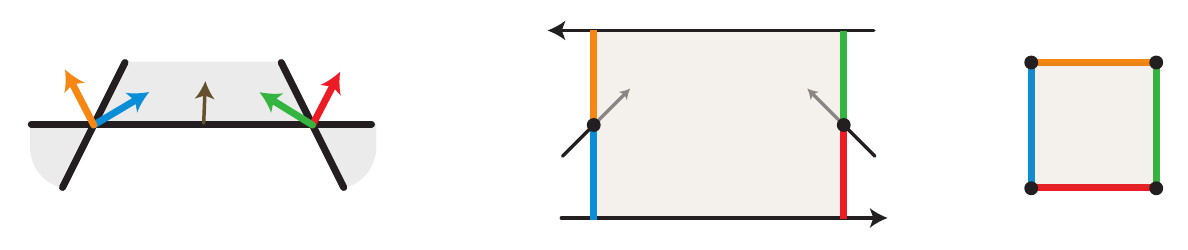}
\caption{Building a square-tiled model for the fibre~$\Sn$.}
\label{buildsquares}
\end{center}
\end{figure}

Denoting by~$d_\gamma$ the number of double points of~$\gamma$, the surface~$\Sn$ is made from~$2d_\gamma$ ribbons.
Considering them as squares, we can see~$\Sn$ as a square-tiled surface. 
Writing~$\sharp\gamma$ for the number of geodesics in~$\gamma$, we have $\vert\chi(\Sn)\vert=2d_\gamma$ and $\sharp\bord\Sn=2\sharp\gamma$, thus 
\begin{equation}
\label{Genre}
g(\Sn) = d_\gamma - \sharp\gamma+1.
\end{equation}

Since the vertical sides of the square tiles correspond to the vectors pointing toward the black faces, 
every horizontal cylinder in~$\Sn$ is obtained by picking a black face and following its sides along~$\gamma$.
Similarly, vertical cylinders are in one-to-one correspondence with white faces of~$\OO\setminus\gamma$.

The four sides adjacent to a double point are easily described: they are aligned along a diagonal of slope~$(1,1)$. 
The Ba'cfi-condition is then easily checked for~$\Sn$. 
Indeed, the path given by turning a tangent vector at a double point of~$\gamma$ 
by a full turn is closed and traverses the four adjacent sides.

The divide~$\gamma$ is in general not assumed to be geodesic. 
However, locally it is close to a geodesic divide. 
For such a divide the surface~$\Sn$ is canonically cooriented by the geodesic flow in~$\U\OO$. 
Pairing with the orientation of~$\U\OO$ we obtain a canonical orientation on~$\Sn$. 
We always give this orientation to~$\Sn$, even when~$\gamma$ is not geodesic. 
Also we will pictures in such a way that the geodesic flow goes from front to back of the screen.

\smallskip
\noindent{\bf Example 1:} The cross on a sphere with four cone points yields a Ba'cfi-tiled surface with four square tiles.
\begin{center}
\includegraphics[width=.5\textwidth]{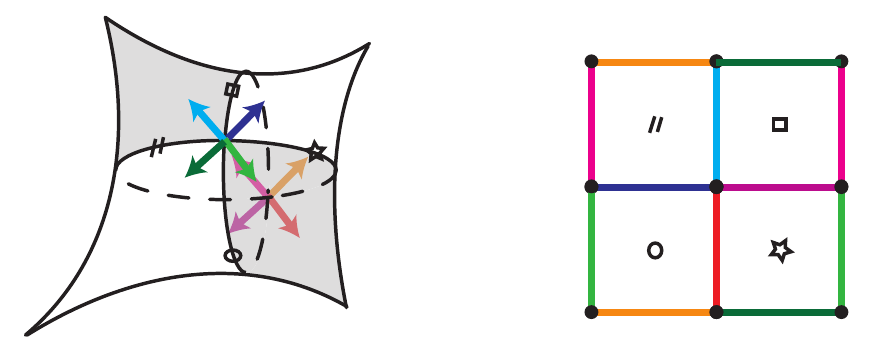}
\end{center}

\noindent{\bf Example 2:} Birkhoff-Fried's original surface. 
This surface was also studied by Ghys, Hashiguchi and Brunella, who computed and used the monodromy map~\cite{Ghys, Hashiguchi, Brunella}.

The numbers on the picture represent the vertical sides. 
In the figure on the right, they correspond to the place where the surface was smoothed, not to the direction at which the tangent vectors point. 
This makes a difference for the white regions, but not for the black ones. 
For example the vectors number 7 point toward NE. 
The good point of this convention is that we will not have to change the numbering when we describe the sister surface~$\Sb$ later on.

\begin{center}
\begin{picture}(150,80)(0,0)
\put(0,0){\includegraphics[width=.882\textwidth]{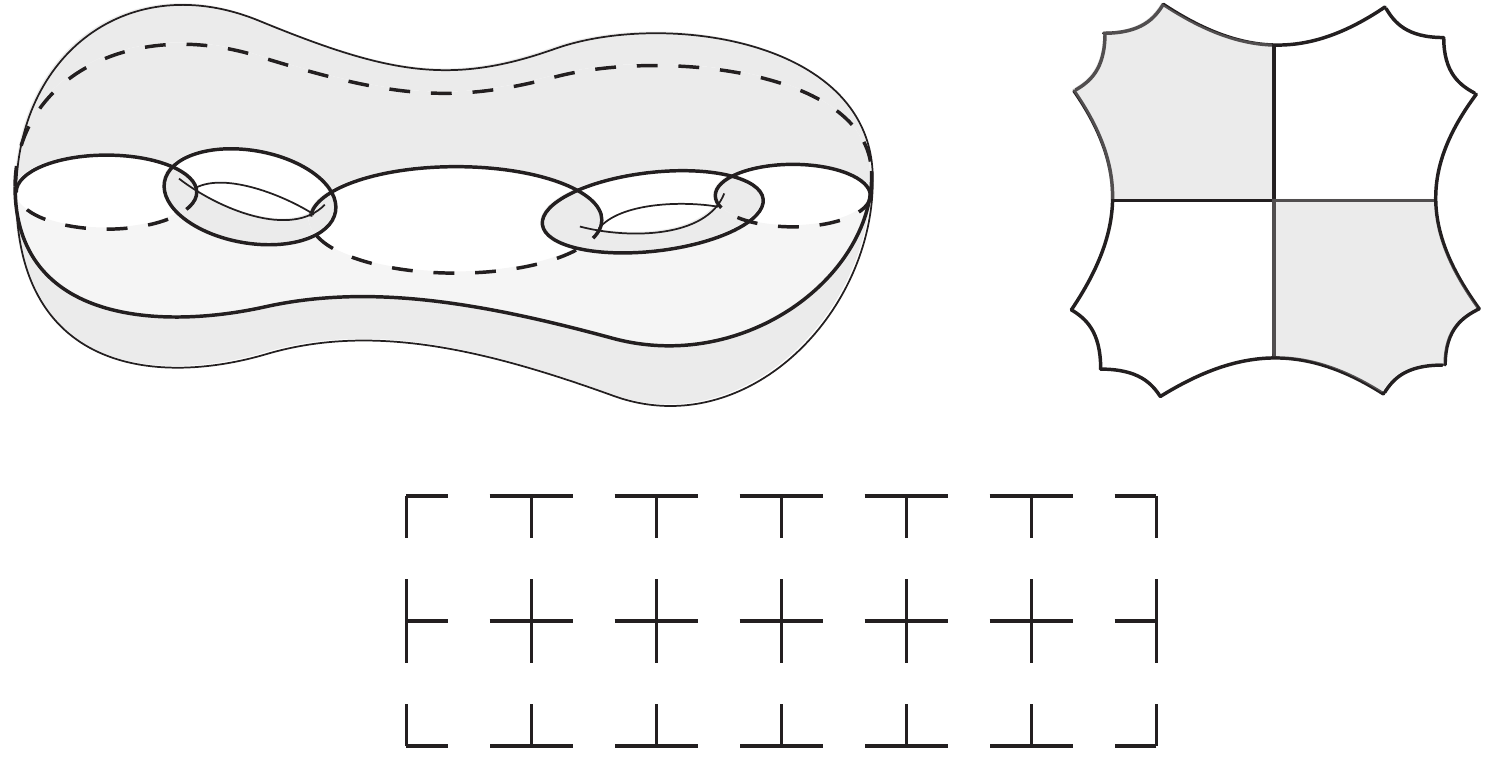}}
\put(119,57.5){$\alpha$}
\put(129.5,63){$\beta$}
\put(123,74.5){$\gamma$}
\put(123,37){$\gamma$}
\put(113,75){$\delta$}
\put(142,75){$\delta$}
\put(108,70){$\epsilon$}
\put(108,41){$\epsilon$}
\put(108.5,60){$\zeta$}
\put(147,60){$\zeta$}
\put(125.5,47){$\eta$}
\put(132,36){$\theta$}
\put(132,74){$\theta$}
\put(113,36){$\iota$}
\put(142,36){$\iota$}
\put(147,70){$\kappa$}
\put(147,41){$\kappa$}
\put(108,50){$\lambda$}
\put(147,50){$\lambda$}
\put(136,53){$\mu$}
\put(46.5,19.5){$\alpha$}
\put(59.3,19.5){$\beta$}
\put(72,19.5){$\gamma$}
\put(84.5,19.5){$\delta$}
\put(97.3,19.5){$\epsilon$}
\put(109.5,19.5){$\zeta$}
\put(46.5,6.5){$\eta$}
\put(59.3,6.5){$\theta$}
\put(72,6.5){$\iota$}
\put(84.5,6.5){$\kappa$}
\put(97.3,6.5){$\lambda$}
\put(109,6.5){$\mu$}
\put(46.5,25.3){$1$}
\put(59.3,25.3){$2$}
\put(71.6,25.3){$3$}
\put(84.2,25.3){$4$}
\put(97.5,25.3){$5$}
\put(109.5,25.3){$6$}
\put(46.5,0){$3$}
\put(59.1,0){$4$}
\put(71.6,0){$5$}
\put(84.5,0){$6$}
\put(98,0){$1$}
\put(109.5,0){$2$}
\put(46.5,13){$7$}
\put(59.3,13){$8$}
\put(71.6,13){$9$}
\put(83.1,13){$10$}
\put(95.8,13){$11$}
\put(108.2,13){$12$}
\put(39,19.3){$13$}
\put(115,19.3){$13$}
\put(51.8,19.3){$14$}
\put(64.5,19.3){$15$}
\put(77,19.3){$16$}
\put(89.8,19.3){$17$}
\put(102,19.3){$18$}
\put(39,6.5){$19$}
\put(115,6.5){$19$}
\put(51.8,6.5){$20$}
\put(64.5,6.5){$21$}
\put(77,6.5){$22$}
\put(89.8,6.5){$23$}
\put(102,6.5){$24$}
\put(140.5,57){$12$}
\put(126,53){$7$}
\put(129,69){$8$}
\put(116.2,38){$9$}
\put(141.5,70){$10$}
\put(109.5,45){$11$}
\put(129,57){$2$}
\put(126,41){$3$}
\put(138.5,72){$4$}
\put(111.2,40){$5$}
\put(144.5,65.3){$6$}
\put(113,53){$1$}
\put(112,57){$13$}
\put(124.5,57){$14$}
\put(124.5,69){$15$}
\put(116,72){$16$}
\put(112,70){$17$}
\put(110,65.3){$18$}
\put(128.5,53){$19$}
\put(128.5,41){$20$}
\put(137,38.4){$21$}
\put(142,40.2){$22$}
\put(144,44.7){$23$}
\put(140.6,53){$24$}
\end{picture}
\end{center}

\subsection{The sister surface~$\Sb$}
For building~$\Sn$ we made the arbitrary choice of considering those tangent vectors pointing toward black regions. 
It is equally natural to consider the other surface, which we denote by~$\Sb$, that is, the set of tangent vectors based on~$\gamma$ and pointing toward white regions. 
We smooth~$\Sb$ with the convention opposite to the one we used for~$\Sn$: we push a little bit the vectors based on double points of~$\gamma$ and pointing toward the white faces, and we pull backward those pointing toward the black faces, see Figure~\ref{smoothedfibre2}.

\begin{figure}[h]
\begin{center}
\includegraphics[width=.5\textwidth]{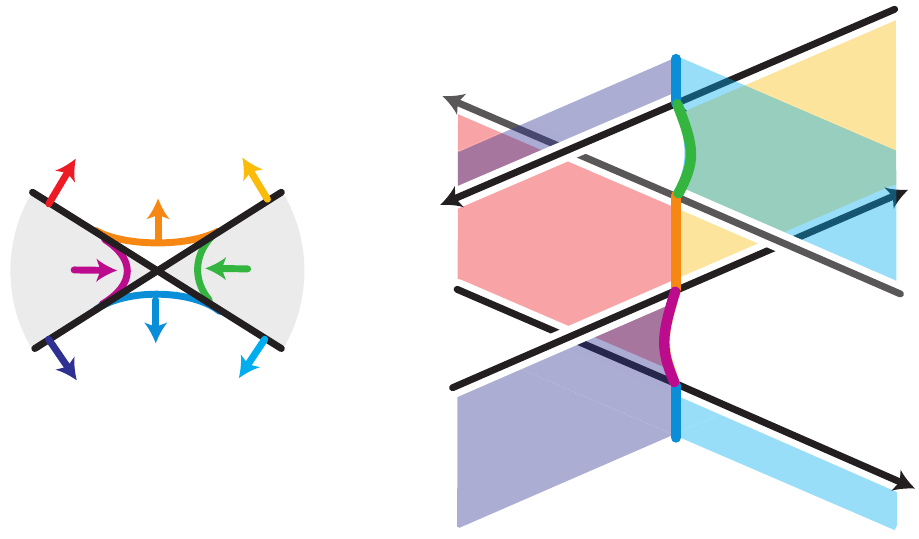}
\caption{The smoothed surface~$\Sb$ around a double point of the divide.}
\label{smoothedfibre2}
\end{center}
\end{figure}

There is a canonical homeomorphism~$\sigma:\Sn\to\Sb$ obtained by turning the vectors by~$180^\circ$, without changing their base points. 
This homeomorphism~$\sigma$ is the restriction to~$\Sn$ of the corresponding involution of~$\U\OO$. 
In the same way as~$\Sn$, the surface~$\Sb$ can be cut in square tiles, and~$\sigma$ canonically identifies the squares of both surfaces, 
sending horizontal sides on horizontal sides and vertical sides on vertical sides. 
When $\gamma$ is a geodesic divide, the geodesic flow on~$\U\OO$ coorients also~$\Sb$. 
Unfortunately,~$\sigma$ reverses the orientation, so that on the pictures~$\Sb$ and~$\Sn$ always have opposite orientations. 

The fibre~$\Sb$ in the Examples~1 and~2 are the same as the fibre~$\Sn$: 
thanks to our convention the representation as square-tiled surfaces of~$\Sn$ and~$\Sb$ coincide, up to reversing the orientation of the plane.

\subsection{Dealing with cone points of order 2}
Around a double point of~$\gamma$, the construction of~$\Sn$ and~$\Sb$ is invariant under rotation by~$180^\circ$. 

Now one  considers the case where~$\gamma$ goes through a conic point of order~2 exactly twice. 
Note on Figures~\ref{smoothedfibre} and~\ref{smoothedfibre2} that the construction of the surfaces~$\Sn$ and~$\Sb$ are invariant under the action of the order 2 screw motion, which correspond to the lift in~$\U\OO$ of the action of an order 2 rotation on the surface~$\Sigma$. 
Therefore the surfaces~$\Sn$ and~$\Sb$ survive after modding out by the order 2 screw motion, and this is how one defines the fibre surface in this case. 
Also, every arc of~$\U\gamma$ going through a double point is identified with its opposite, so that such arcs only account for~1 towards~$\sharp\gamma$. 
However, there are also only two ribbons adjacent to this double point, so the contribution towards~$d_\gamma$ is also~1, so that Formula~\eqref{Genre} for the genus still holds.

This case gives rise to interesting examples, in particular concerning the construction of pseudo-Anosov maps with small stretch factor.

\medskip
\noindent
\begin{minipage}{.4\textwidth}
{\bf Example 3:} The height on a sphere with three cone points of order 2, $q$, and~$r$, respectively. The surfaces~$\Sn$ and~$\Sb$ consist of one square only, with~$g(\Sn)=g(\Sb)=1$.
\end{minipage}
\begin{minipage}{.6\textwidth}
\begin{center}
\includegraphics[width=\textwidth]{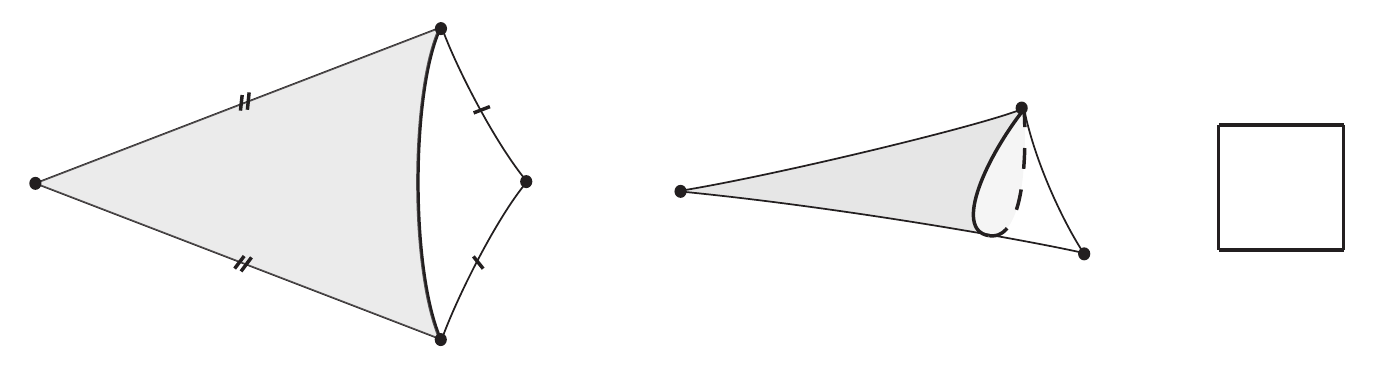}
\end{center}
\end{minipage}

\medskip
\noindent{\bf Example 4:} The loop, on the same orbifolds. In this case the surface~$\Sn$ is made from three square tiles, with~$g(\Sn)=2$. 
The fibre of the order 2 point and of the double point are represented with long and short dotted lines.
\begin{center}
\begin{picture}(130,45)(0,0)
\put(0,0){\includegraphics[width=.71\textwidth]{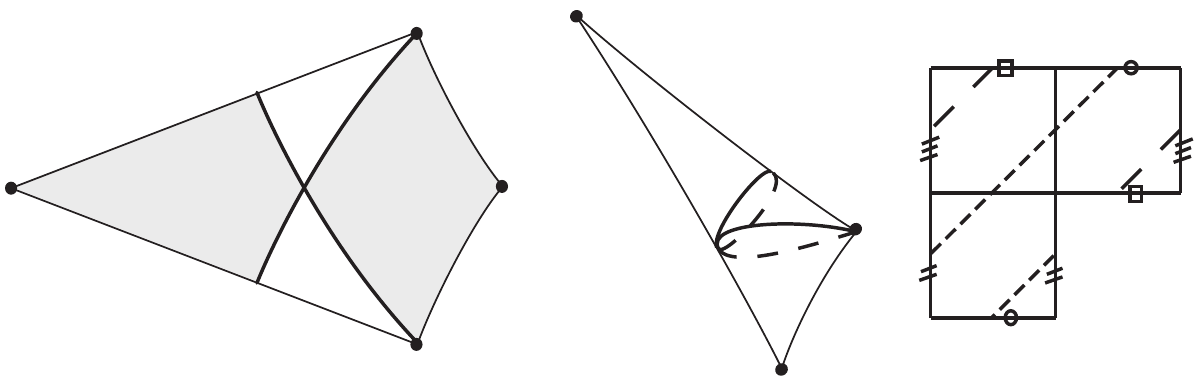}}
\put(27,25){$c$}
\put(27,17){$c$}
\put(38,27.5){$a$}
\put(38,14){$b$}
\put(104.5,27){$b$}
\put(104.5,13.5){$c$}
\put(118,27){$a$}
\end{picture}
\end{center}

\noindent{\bf Example 5:} 
The even ping-pong, with $4n-1$ squares and $g(\Sn)=2n$. 
The identifications of the vertical sides are the obvious ones, row by row. 
Every horizontal cylinder has 4 squares (except the first and last ones), which corresponds to the fact that every black region (except the first and last one) has 4 sides. 
The vertical cylinders are also made of 4 squares (except the last one), since the white faces also have 4 sides. 
The fibres of two double points are indicated by dotted lines.
\begin{center}
\begin{picture}(130,45)(0,0)
\put(0,0){\includegraphics[width=.75\textwidth]{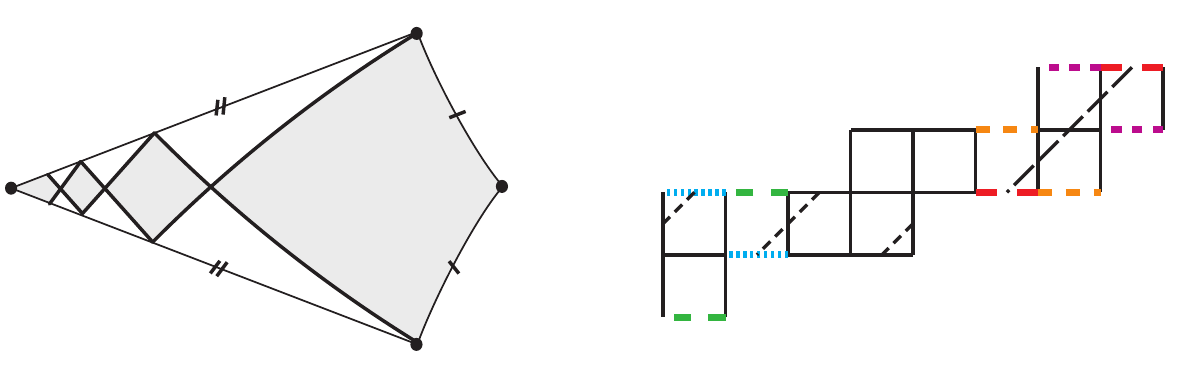}}
\put(34,29){$a$}
\put(34,13){$b$}
\put(19,17){$c$}
\put(19,25){$d$}
\put(14,23){$e$}
\put(14,18){$f$}
\put(10,18){$g$}
\put(10,23){$h$}
\put(7,24.5){$i$}
\put(7,16.5){$j$}
\put(4,17){$k$}
\put(119,30){$a$}
\put(113,30){$b$}
\put(113,23){$c$}
\put(106,23){$d$}
\put(99.5,23){$e$}
\put(92.5,23){$f$}
\put(92.5,16.5){$g$}
\put(86,16.5){$h$}
\put(79.5,16.5){$i$}
\put(73,16.5){$j$}
\put(73,10){$k$}
\end{picture}
\end{center}

\noindent{\bf Example 6:} 
The odd ping-pong, with $4n+1$ squares and $g(\Sn)=2n+1$. 
The picture of~$\gamma$ is the same with one more crossing, and the picture of~$\Sn$ is the same with two more square tiles.

\section{The monodromy map~$f_\gamma$ and the proof of Theorem~\ref{T:antitwist}}
\label{S:mono}

As soon as $\gamma$ is filling, the surface~$\Sn$ and $\Sb$ are fibre surfaces in~$\U\OO$, as proven by A'Campo (when $\OO$ is a disc) and by Ishikawa (when $\OO$ is a closed surface). 
Actually these authors do more, since they describe very explicitely the associated fibration of~$\U\OO\setminus\U\gamma$ over~$\Sph^1$. 
For that it is enough to consider a Morse function~$h_\gamma$ on~$\OO$ which has exactly one maximum in every black face of~$\OO\setminus\gamma$, one minimum in every white face, and which vanishes on~$\gamma$. 
One also introduces an auxilliary bump function~$\chi:\OO\to[0,1]$ such that for every critical point~$p$ of~$h$ the function $\chi$ is constant equal to~1 on a small disc centered at~$p$, and vanishes away from a neighborhood of the union of these discs. 
For $\eta$ small enough  the argument of the function~$H_{\gamma, \eta}:\U\OO\setminus\U\gamma\to\C$, $(x,u)\mapsto h_\gamma(x)+i\eta dh_\gamma(x)(u) - \frac 1 2 \eta^2\chi(x)d^2h_\gamma(x)(u,u)$  defines a fibration over the circle~\cite{ACampo2, Ishikawa}. 
The surfaces~$\Sn$ and $\Sb$ correspond to the preimages of~$i$ and $-i$ respectively.

In order to describe the monodromy maps, it is enough to consider a set of curves~$c_1, \dots, c_k$ that together cut~$\Sn$ into a collection of discs and to understand their images. 
For $i=1, \dots, k$, the image~$f_\gamma(c_i)$ is not uniquely defined, but its isotopy class is given by those curves in the boudary surface~$\Sn\times\{1\}$ of~$\U\OO\setminus\Sn$ that are isotopic to~$c_i\times\{0\}$. 

Here, instead of describing the monodromy maps~$\Sn\to\Sn$ and $\Sb\to\Sb$ directly, it is easier to compute the half-return maps~$\fn: \Sn\to\Sb$ and~$\fb: \Sb\to\Sn$, and then to compose them. 
So what we really do is to consider the arcs in~$\Sn$ that correspond to the edges of the square tiles, and to find arcs in~$\Sb$ to which they are isotopic through the correct connected component of~$\U\OO\setminus(\Sn\cup\Sb)$.

\subsection{The maps~$\fn$ and~$\fb$}
Concerning~$\fn$, we first observe that the horizontal cylinders are preserved. 
This is immediate from the fact that the set of vectors pointing into a black face is isotopic to the set of those vectors pointing outward of the same face. 

Then observe that the image under~$\fn$ of the horizontal sides is rather simple. 
Indeed a horizontal side corresponds to vectors pointing toward the white faces, as the red and the green arcs in Figure~\ref{smoothedfibre}. 
Since these vectors have been pulled backward when smoothing the surface~$\Sn$, these vectors are just pushed forward. 
Thus they are sent on the corresponding horizontal sides of~$\Sb$, see Figure~\ref{horizontalimage} below.

\begin{figure}[h]
\begin{center}
\includegraphics[width=.4\textwidth]{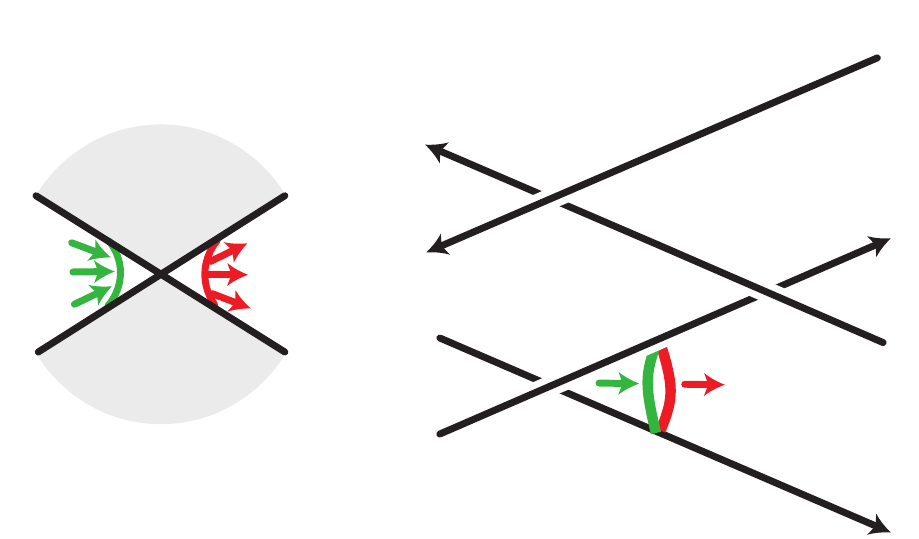}
\end{center}
\caption{The image of a horizontal side of~$\Sn$, depicted here in green, is the corresponding horizontal side of~$\Sb$, depicted here in red.}
\label{horizontalimage}
\end{figure}
Even though the horizontal sides are preserved as a set, they are not preserved individually. 
Indeed, every horizontal side is mapped to the horizontal side that corresponds to the opposed quadrant at the considered double point of~$\gamma$. 
Hence~$\fn$ is an involution of the set of horizontal sides given by a translation with the vector~$(1,1)$ in the square tiles-coordinates. 
This is coherent with the fact that any square is preserved by a $(2,2)$-translation, which in turn is exactly the property of being Ba'cfi-tiled. 
We note that the orientation of the sides is preserved.

\smallskip
\noindent{\bf Example 2:} On Birkhoff-Fried's original surface, the map~$\fn$ acts as the following involution on the set of horizontal edges:~$1 \leftrightarrow12, 2\leftrightarrow 7, 3\leftrightarrow 8, 4\leftrightarrow 9, 5\leftrightarrow10, 6\leftrightarrow11$. This is depicted in Figure~\ref{BFhorizontal}.

\begin{figure}[h]
\begin{center}
\includegraphics[width=.4\textwidth]{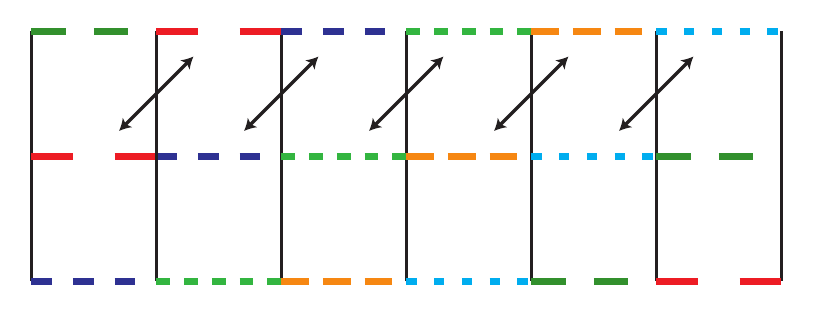}
\end{center}
\caption{The involution of the set of horizontal edges given by a translation with the vector~$(1,1)$ in the square-tile coordinates.}
\label{BFhorizontal}
\end{figure}

We already observed that for every horizontal cylinder, the top and the bottom sides are exchanged by~$\fn$. 
Since the orientation has to be reversed, we deduce that for every horizontal cylinder the transformation is the composition of a transvection and a reflection along a horizontal axis. 

In order to determine the coefficient of the transvection, we need to consider the vertical sides: they will cross the back region toward which they are pointing.
So let~$r$ be a black region in~$\OO\setminus\gamma$ with vertices $P_1,P_2,\dots,P_k$ and suppose for now that it contains no cone point.
Denote by~$a_{P_1}$ the arc formed by the vectors based at~$P_1$ and pointing toward~$r$.
The image $\fn(a_{P_1})$ is then an arc with one extremity at~$P_k$ and the other one at~$P_2$, crossing once all ribbons associated to~$P_2P_3, P_3P_4, \dots, P_{k-1}P_k$. The arc~$\fn(a_{P_1})$ points toward the white regions adjacent to~$r$, 
except near the fibres of the points~$P_3, \dots, P_{k-1}$, where due to continuity the arc~$\fn(a_{P_1})$ needs to point toward the black region opposed to~$r$, see Figure~\ref{verticalimage}.
\begin{figure}[h]
\begin{center}
\includegraphics[width=.5\textwidth]{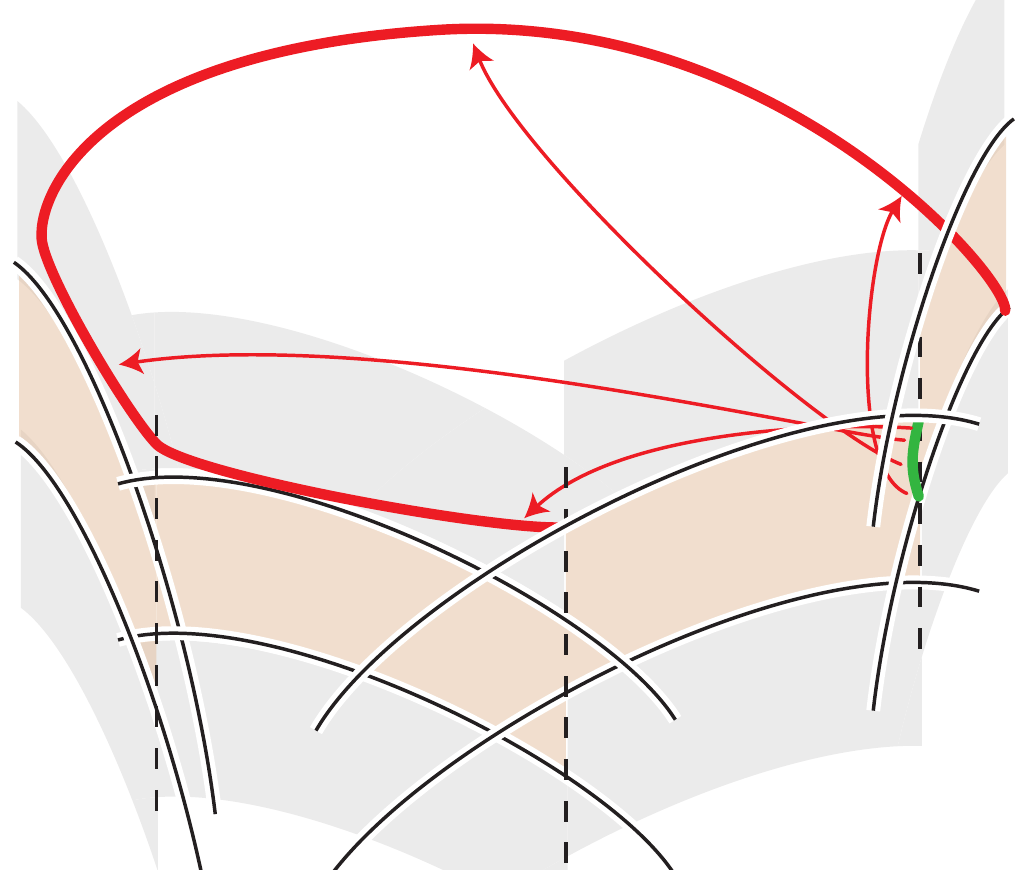}
\end{center}
\caption{The effect of~$\fn$ on an arc that corresponds to a vertical side in the Ba'cfi-tiling.}
\label{verticalimage}
\end{figure}
This implies that the arc~$\fn(a_{P_1})$ cuts the vertical sides of the corresponding square tile, but no horizontal side.
If the cylinder associated to the region~$r$ contains $k$ square tiles, we have that the~$k$ vertical sides are sent on segment of slope~$(k-2,-1)$. 
Therefore~$\fn$ acts on the corresponding cylinder as the horizontal antitwist~$\tau_h^{(1)}$.

Suppose now that the black region~$r$ contains a cone point of order~$j$. The same reasoning, but this time in the $j$-fold cover, shows that the vertical sides are sent on segments of slope~$(jk-2, -1)$. We therefore get that~$\fn$ acts on the corresponding cylinder as the horizontal left-veering antitwist~$\tau_h^{(j)}$.

For the map~$\fb$, we can apply the exact same reasoning, taking the orientation into account. We obtain that vertical sides are sent on vertical sides, and every horizontal side is sent on a segment of slope~$(-1, jk-2)$, so that $\fb$ is a vertical right-veering antitwist.
The monodromy map~$f_\gamma$ is the composition~$\fb\circ \fn$. 
This concludes the proof of Theorem~\ref{T:antitwist}.

\subsection{Examples}
In the cases where~$\Sn$ is of genus one, we write $X=(\begin{smallmatrix}1&1\\ 0&1\end{smallmatrix})$ and $Y=(\begin{smallmatrix}1&0\\ 1&1\end{smallmatrix})$. 
It is known that every matrix in~$\SLZ$ with trace larger than or equal to~$2$ is conjugate to a positive product of $X$s and $Y$s, unique up to cyclic permutation of the letters.

\smallskip
\noindent{\bf Example 1:} $\Sn$ is a torus made of~$2\times2$ square tiles. 
If~$p, q, r, s$ denote the orders of the cone points, where~$p$ and~$r$ correspond to the orders in the black regions,  
then~$\fn$ is the composition of a reflection about a horizontal axis and a horizontal transvection of factor~$2p-2$ on the top cylinder and of factor~$2r-2$ on the bottom line. 
Together this makes a a transvection of factor~$2p+2r-4$, on a width of two blocks.
Therefore, at the global level of~$\Sn$, this is a symmetry about a horizontal axis and a transvection of factor~$p+r-2$.
Similarly,~$\fb$ is the composition of a vertical symmetry and a transvection of factor~$q+s-2$.

After collapsing the boundary components into points, the monodromy is then given by the matrix $(\begin{smallmatrix}-1&0\\q+s-2&1\end{smallmatrix})(\begin{smallmatrix}1&p+r-2\\0&-1\end{smallmatrix})=(\begin{smallmatrix} -1& -(p+r-2)\\ q+s-2 & (p+r-2)(q+s-2)-1 \end{smallmatrix})$, which is conjugate to the product~$X^{p+r-4}YX^{q+s-4}Y$.

\smallskip
\noindent
{\bf Example 2:} 
$\Sn$ is a torus made of 12 square tiles. 
The action of~$\fn$ has a linear part given by $(\begin{smallmatrix}1&4\\0&-1\end{smallmatrix})$.
Concerning~$\fb$, the linear part is similarly given by $(\begin{smallmatrix}-1&0\\4&1\end{smallmatrix})$.
Therefore the linear action of~$f_\gamma$ on the square tiles is given by $(\begin{smallmatrix}-1&0\\4&1\end{smallmatrix})(\begin{smallmatrix}1&4\\0&-1\end{smallmatrix}) =  (\begin{smallmatrix}-1&-4\\4&15\end{smallmatrix})$. 

In order to have the action on~$\Sn$, we need to pass to a basis of~$\Sn$ as a torus, given for example by the vectors $(6,0)$ and~$(2,2)$. 
We then obtain the action $(\begin{smallmatrix}6&2\\0&2\end{smallmatrix})(\begin{smallmatrix}-1&4\\4&15\end{smallmatrix})(\begin{smallmatrix}6&2\\0&2\end{smallmatrix})^{-1}=(\begin{smallmatrix}-5&-8\\12&19\end{smallmatrix})$ on the torus. 

One checks that this matrix has trace 14, which means that there are 12 fixed points. 
These fixed points correspond exactly to the boundary components of~$\Sn$, and there are no others, as was remarked by Ghys~\cite{Ghys}. 
Also the given matrix is conjugated to $X^2YX^2Y$, which is coherent with Hashiguchi's computation~\cite{Hashiguchi}.

\smallskip
\noindent
{\bf Example 3:} $\Sn$ has only one square tile. 
If~$2, q,$ and~$r$ are the order of the cone points, then~$\fn$ acts by~$(\begin{smallmatrix}1&q-2\\0&-1\end{smallmatrix})$, and~$\fb$ acts by~$(\begin{smallmatrix}-1&0\\r-2&1\end{smallmatrix})$. 
We obtain for~$f_\gamma$ the matrix~$(\begin{smallmatrix}-1&2-q\\r-2&(r-2)(q-2)-1\end{smallmatrix})$, which is conjugate to~$X^{q-4}YX^{r-4}Y$. 
This coincides with the result of~\cite{GenusOne}.

\smallskip
\noindent
{\bf Example 4:} $\Sn$ has three square tiles that form two horizontal cylinders but only one vertical cylinder.
The map~$\fn$ acts as a horizontal left-veering antitwist of order~$q-1$ on the cylinder~$ab$, so that the linear part is~$(\begin{smallmatrix} 1&2q-2\\0&-1\end{smallmatrix})$, and it acts as a horizontal left-veering antitwist of order~$r$ on the horizontal cylinder that consists of~$c$ only, hence the linear part is~$(\begin{smallmatrix} 1&r-2\\0&-1\end{smallmatrix})$. 
Since~$ab$ is one cylinder, the action is actually~$(\begin{smallmatrix} 1&q-1\\0&-1\end{smallmatrix})$ in the canonical cylindrical coordinates.
Similarly the map~$\fb$ acts with linear part~$(\begin{smallmatrix} -1&0\\1&1\end{smallmatrix})$ on each square, and so by~$(\begin{smallmatrix} -1&0\\1/3&1\end{smallmatrix})$ on the vertical cylinder~$abc$.

With~$q=3$ and~$r=7$, the orbifold is the triangular orbifold of type~$(2,3,7)$, which is the hyperbolic~$2$-orbifold of minimal area. 
The action of~$f_\gamma$ on the homology~$\HH_1(\Sn)$ can be computed. 
This homology has dimension~$4$, but it is easier to work with a generating set consisting of 5 curves, namely two curves corresponding to the boundary of the white faces, one corresponding to the dark face, and two corresponding to the two double points. 
There, the images of the induced actions of~$\fn$ and~$\fb$ can be described by the following matrices. 
\[
\begin{pmatrix} 
1 & 0 & 0 & 0 & 0\\
0 & 1 & 0 & 0 & 0\\
6 & 7 & 1 &-2 &-2\\
3 & 0 & 0 &-1 & 0\\
3 & 7 & 0 & 0 &-1
\end{pmatrix}
\begin{pmatrix}
1 & 0 & 2 &-2 &-1\\
0 & 1 & 1 & 0 &-1\\
0 & 0 & 1 & 0 & 0\\
0 & 0 & 1 &-1 & 0\\
0 & 0 & 2 & 0 &-1
\end{pmatrix}=\begin{pmatrix}
1 & 0 & 2  &-2 & -1\\
0 & 1 & 1  & 0 & -1\\
6 & 7 & 14 &-10 &-11\\
3 & 0 & 5 & -5 & -3\\
3 & 7 & 11 & -6 & -9
\end{pmatrix}.\]
Then the action on a basis of~$\HH_1(\Sn)$ is obtained by projecting parallely to the kernel of the generating set, and this projection commutes with the action induced 
on~$\HH_1(\Sn)$ by~$f_\gamma$. 
Therefore the characteristic polynomial of the action induced by~$f_\gamma$ is a factor of the characteristic polynomial of the matrix obtained above. 
The characteristic polynomial of the matrix to the right is $(x-1)(x^4-x^3-x^2-x+1)$, so that the characteristic polynomial of the action induced on~$\HH_1(\Sn)$
is~$(x^4-x^3-x^2-x+1)$. We deduce that the stretch factor is approximately $1.722,$ 
and that the map~$f_\gamma$ is actually the minimizer of the stretch factor among genus~2 pseudo-Anosov mapping classes~\cite{LT}.

%

\smallskip
\noindent
{\bf Example 5:} $\fn$ acts on the cylinder $ab$ as $(\begin{smallmatrix} 1&2q-2\\0&-1\end{smallmatrix})$, 
on the cylinder $cdef$ as $(\begin{smallmatrix} 1&2\\0&-1\end{smallmatrix})$, on the cylinder $ghij$ as$(\begin{smallmatrix} 1&2\\0&-1\end{smallmatrix})$, on $klmn$ as $(\begin{smallmatrix} 1&2\\0&-1\end{smallmatrix})$, and on~$o$ as $(\begin{smallmatrix} 1&r-2\\0&-1\end{smallmatrix})$.
For~$\fb$, it acts on the vertical cylinder $abcd$ as $(\begin{smallmatrix} -1&0\\2&1\end{smallmatrix})$, the same for the next ones and on $mno$ it acts as $(\begin{smallmatrix} -1&0\\1&1\end{smallmatrix})$. 

A computation similar to the one in the previous example allows to compute the characteristic polynomial of the action on homology from there. 
For example with $n=3$ which means 11 square tiles, $q=6$ and~$r=16$, one finds the polynomial $x^{12} - 4x^{11} - 4x^{10} - 4x^9 - 4x^8 - 4x^7 - 14x^6 - 4x^5 - 4x^4 - 4x^3 - 4x^2 - 4x + 1$ (see Theorem~\ref{T:noPowerThurstonPennerlater}). 

\section{Constructing an orbifold and the proof of Theorem~\ref{T:converse}}
\label{S:Converse}

In this section, we start with a Ba'cfi-tiled surface~$S$, a left-veering horizontal antitwist~$\tau_h$ and a right-veering vertical one~$\tau_v$, and we construct an orientable 2-orbifold~$\OO$ together with a divide~$\gamma$ so that $\Sn$ is homeomorphic to~$S$ and the divide monodromy~$f_\gamma$ associated to~$\Sn$ is conjugated to the composition~$\tau_v\circ\tau_h$. 
This Section was suggested by Mario Shannon.

\subsection{Constructing a divide from a Ba'cfi-tiled surface}

Let $S$ be a Ba'cfi-tiled surface and let~$Q(S)$ denote the set of its squares. 
The following construction is illustrated on Figure~\ref{F:Intro}, if one goes from the image on the right to the one on the left.

To every square~$s$ in~$Q(S)$ we associate an abstract cooriented edge~$e_s$ (that is, with an orientation of the normal bundle, or, equivalently, with an arrow transverse to it). 
To the right extremity of $e(s)$ we attach the edges $e_{E(s)}, e_{NE(s)},$ and $e_{ENE(s)}$ in trigonometric order. 
To the left extremity we attach the edges $e_{N(s)}, e_{EN(s)},$ and $e_{NEN(s)}$ in clockwise order. 
Thanks to the relation $ENEN(s)=NENE(s)=s$, the above attachments are coherent: edges get attached four by four at all vertices. 
In this way we obtain a 4-valent graph with a coorientation of the edges and a circular order around every vertex.  
We see this as a fat graph. 
Every boundary component of this fat graph corresponds to a cylinder of $S$: those lying on the positive sides of their adjacent edges correspond to horizontal cylinders, while those lying on the negative sides of their adjacent edges correspond to vertical cylinder. 
We then obtain a surface, which we denote by~$\Sigma_S$, by gluing a disc along every boundary component of the fat graph. 
The graph is a divide~$\gamma_S$ on this surface.

\subsection{Adding the cone points from the antitwists}

Assume now that we are given two antitwists~$\tau_h$ and~$\tau_v$ on $S$, such that~$\tau_h$ is a left-veering antitwist along the horizontal cylinders of~$S$ and~$\tau_v$ is a right-veering antitwist along the vertical cylinders. 
For every cylinder $c$, we denote by $n(c)$ the exponent of~$c$, that is, the number of times~$\tau_h$ ({\it resp.}~$\tau_v$) wraps around $c$. 
For every cylinder, we then add a cone point of order~$n(c){-}1$ in the corresponding face of~$\Sigma_S$, thus obtaining an orbifold~$\OO_S$ with a divide~$\gamma_S$.

\begin{proof}[Proof of Theorem~\ref{T:converse}]
Starting from a square-tiled surface~$S$ and a composition of a horizontal left-veering antitwist~$\tau_h$ with a vertical right-veering one~$\tau_v$, we construct the orbifold~$\OO_S$ and the divide~$\gamma_S$ as above. 
Everything has been done so that the associated divide monodromy~$f_{\gamma_S}$ is $\tau_v\circ\tau_h$.
\end{proof}


\section{Stretch factors and comparisons with other constructions}\label{S:stretch factor}
 
We study here the stretch factors of divide monodromies. 
Their locations in~$\C$ or the properties of their Galois conjugates allow to compare divide monodromies with other known construction of pseudo-Anosov maps. 

\subsection{Stretch factors of divide monodromies}
Divide monodromies tend to have rather small stretch factor. 
For example the case~$q=3, r=7$ in Example~3 yields a map~$f_\gamma$ on a torus that is of trace 3, hence of stretch factor~$\frac{3+\sqrt 5}{2}$ --- the smallest possible value for an Anosov map with orientable invariant foliations in genus~1.

Similarly the same case~$q=3, r=7$ in Example~4 yields a map~$f_\gamma$ on a genus~$2$ surface which happens to coincide with the Lanneau-Thiffeault map~\cite{LT}, so it also has the smallest possible stretch factor among pseudo-Anosov maps with orientable invariant foliations in genus~2. 

The next statement partly explains these facts. 
Recall that given a pseudo-Anosov map~$f:S\to S$, one forms the 3-manifold~$M_f:=S\times[0,1]/\!_{(x,1)\sim(f(x),0)}$. 
The suspension flow~$\partial_z$, where~$z$ in the second coordinate is then of pseudo-Anosov type. 
The associated fibred cone~$\mathcal C_f$ is the set of classes~$\sigma\in\HH_2(M_f, \Q)$ such that~$\sigma$ contains a multiple of a global section for~$\partial_z$. 
By a theorem of Schwartzman~\cite{Schwartzman},~$\mathcal C_f$ is an open cone. 
Given a class~$\sigma\in\mathcal C_f$ and a global section~$S\in\sigma$, the induced first-return map~$f_S:S\to S$ is of pseudo-Anosov type, hence it has a stretch factor~$d_S$. 
Fried remarked that the function~$d_S^{|\chi(S)|}$, called the {normalized stretch factor}, is convex and tends to infinity when $\sigma$ approaches the boundary of~$\mathcal C_f$, hence it has a global minimum inside~$\mathcal C_f$.
We now prove Theorem~\ref{T:stretchfactor}, which we restate below for convenience.

\begin{theorem}\label{T:stretchfactorlater}
Let~$S$ be a Ba'cfi-surface and let~$f:S\to S$ be a divide monodromy. 
Denote by~$S^o$ the surface obtained from~$S$ by removing the vertices of the square tiles and by~$f^o$ the induced map. 
Then~$f^o$ minimizes the normalized stretch factor in~$\mathcal C_{f^o}$. 
\end{theorem}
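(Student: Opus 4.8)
The plan is to exhibit the mapping torus $M_{f^o}$ together with its suspension flow as a Birkhoff-type picture coming from the geodesic (or pseudo-Anosov suspension) flow on the unit tangent bundle $\U\OO$, and then to invoke Fried's characterization of the minimizer of the normalized stretch factor as the section dual to the \emph{orbit class} of the flow. Concretely, since $f$ is a divide monodromy, $S^o$ sits inside $\U\OO$ (with vertices of the square tiles removed, which correspond exactly to the fibres over the double points of $\gamma$) as a Birkhoff section for a pseudo-Anosov flow $\phi$ --- the geodesic flow when $\gamma$ is geodesic, and in general the flow $\partial_z$ of the fibration $H_{\gamma,\eta}$ described in Section~\ref{S:mono}, whose weak stable and unstable foliations restrict to the invariant foliations of $f^o$ on $S^o$. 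Thus $M_{f^o}$ is homeomorphic to the complement $\U\OO\setminus\U\gamma$ (or an open subset thereof), and under this identification $\phi$ is the suspension flow $\partial_z$.

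The key input is then the following theorem of Fried (see \cite{Fried}, and the discussion just before Theorem~\ref{T:stretchfactorlater}): for a pseudo-Anosov flow $\phi$ on a 3-manifold $M$, the set of cohomology classes that are positive on every periodic orbit of $\phi$ is an open cone $\mathcal D_\phi$, the dual of $\mathcal C_{f^o}$ is this cone, and the normalized stretch factor, viewed as a function on $\mathcal C_{f^o}$, attains its minimum precisely at the class dual to the \emph{homological direction} of $\phi$ --- equivalently, the Euler class direction of the flow, which is the barycenter-type class singled out by the flow itself. So the statement reduces to checking that the section $S^o$ represents exactly this distinguished class. First I would identify the class $[S^o]\in \HH_2(M_{f^o},\Q)$ with the Poincar\'e dual of the class represented by the contact-type 1-form (or the canonical 1-form) adapted to $\partial_z$; this is the content of the fact that the fibration $H_{\gamma,\eta}$ was built symmetrically from $h_\gamma$, with one critical point per complementary face, so that the section $S^o=\Sn^o$ and its sister $\Sb^o$ are the two ``canonical'' sections, dual to the flow direction. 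The symmetry $\sigma:\Sn\to\Sb$ turning vectors by $180^\circ$ is an orientation-reversing involution of $\U\OO$ preserving the flow up to reversal, and it exchanges $[\Sn^o]$ and $[\Sb^o]$; this symmetry pins down $[S^o]$ as the unique $\sigma$-symmetric (hence barycentric) class in the cone, which is exactly Fried's minimizer.

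\medskip

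The main obstacle I expect is making precise the identification ``$S^o$ is the section dual to the homological direction of $\partial_z$'' --- i.e.\ verifying that $\Sn^o$ is not merely \emph{a} Birkhoff section but the one realizing Fried's canonical class. The cleanest route is probably to compute the first-return time of $\partial_z$ to $\Sn^o$ and show it is (homologically) cohomologous to the tautological return-time $1$-form, or equivalently to show that $\Sn^o$ intersects every periodic orbit of $\partial_z$ with total sign equal to the period up to a global constant, using that $\gamma$ is \emph{filling} so that every orbit meets $\Sn^o$ positively. Once that is in place, convexity of the normalized stretch factor on $\mathcal C_{f^o}$ (Fried) and the fact that the canonical class is an interior critical point of this convex function force it to be the global minimizer, completing the proof. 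A secondary technical point to handle carefully is the passage to $S^o$: one must check that removing the vertices (the fibres over double points, where $f$ has its boundary singularities) turns $f$ into a genuinely pseudo-Anosov map $f^o$ on a punctured surface with no boundary, so that $\mathcal C_{f^o}$ and the Thurston norm are defined, and that this does not move the relevant homology class --- which holds because those fibres are exactly the singular orbits of $\partial_z$ that lie \emph{on} $\Sn$ rather than meeting it transversally.
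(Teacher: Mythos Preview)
Your outline has a genuine gap: the ``theorem of Fried'' you invoke --- that the minimizer of the normalized stretch factor sits at the class dual to the \emph{homological direction} or \emph{orbit class} of the flow --- does not exist in that form. What Fried actually proves (and what the paper recalls just before the statement) is only that the normalized stretch factor is convex on $\mathcal C_{f^o}$ and tends to infinity at the boundary, hence has a \emph{unique} interior minimum. There is no intrinsic characterization of \emph{where} that minimum lies; to locate it you need extra structure. Your proposed routes to supply that structure --- checking that the first-return time is cohomologous to the tautological form, or that intersection numbers with periodic orbits are proportional to periods --- are essentially restatements of the conclusion, not proofs of it, and you yourself flag this as the main obstacle without resolving it.

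The paper's argument supplies the missing structure in a different and much shorter way. It cites \cite{Intersection}, where a cross-section of the fibred cone $\mathcal C_{f^o}$ is computed explicitly: it is the dual unit ball of the \emph{intersection norm} associated to $\gamma$ on $\OO$. Being the unit ball of a norm, this cross-section is centrally symmetric, and its center of symmetry is exactly $[S^o]$. Now Fried's convexity and uniqueness do the rest: the unique minimum of a strictly convex symmetric function on a symmetric domain must sit at the center, i.e.\ at $[S^o]$. Your idea of using the flow-reversing involution $\sigma$ is morally pointing at the same symmetry --- and the paper remarks parenthetically that the symmetry of the cone about $[S^o]$ ``can be seen directly'' --- but as written your argument does not actually establish that $\sigma$ induces a symmetry of $\mathcal C_{f^o}$ fixing $[S^o]$ (note that $\Sn^o$ and $\Sb^o$ are isotopic fibres of the \emph{same} fibration, so $[\Sn^o]=[\Sb^o]$, and the relevant induced map on $\HH_2$ needs to be worked out with care). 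If you want to avoid the citation to \cite{Intersection}, that involution argument is the thing to make precise.
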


\begin{proof}
Let $\OO$ and $\gamma$ be associated to~$S$ and $f$ by Theorem~\ref{T:converse}. 
A section of the fibred cone associated to~$S^o$ was computed in~\cite{Intersection}: it is the dual unit ball of the intersection norm associated to~$\gamma$ on~$\OO$. 
This section happens to be symmetric, and the center of symmetry corresponds to the class~$[S^o]$ (actually, the fact that the cone is symmetric with respect to~$[S^o]$ can be seen directly). 
By symmetry and uniqueness of the minimizer, $[S^o]$ gives the minimum of the normalized stretch factor.
\end{proof}

We now prove Theorem~\ref{T:diameter}, which we restate below for convenience.

\begin{theorem}\label{T:diameterlater}
Given a hyperbolic good 2-orbifold~$\OO$ and a geodesic divide~$\gamma$ on~$\OO$ such that all regions of~$\OO\setminus\gamma$ have diameter at most~$D$, the stretch factor of the associated map~$f_\gamma$ is bounded from above by~$e^{2D}$.
\end{theorem}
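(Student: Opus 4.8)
The plan is to work directly with the geodesic flow on $\US\OO$, for which the divide surface $\Sn$ is a Birkhoff section, and to control the first-return map through the expansion of the flow. Recall that when $\OO$ is a hyperbolic good $2$-orbifold, the geodesic flow is Anosov, and its weak-unstable foliation $\mathcal F^u$ restricts to the unstable foliation of the pseudo-Anosov map $f_\gamma$ on $\Sn$. The stretch factor $\lambda$ of $f_\gamma$ is therefore the factor by which the first-return map expands the leaves of $\mathcal F^u\cap\Sn$. So the whole point is to bound, for a typical point $x$ on $\Sn$, the unstable expansion accumulated by the geodesic flow between $x$ and its first return $f_\gamma(x)$ to $\Sn$.

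The key geometric step is to bound the \emph{flow time} of the first return. A point of $\US\OO$ based outside a tiny neighbourhood of $\gamma$ and pointing (roughly) toward a black face will hit $\Sn$ after flowing for a time at most the time needed to cross the complementary region it is sitting in; since each region of $\OO\setminus\gamma$ has diameter at most $D$, a geodesic segment starting inside such a region and not yet having met $\gamma$ has length at most $D$ (a geodesic leaving and re-entering the same region still crosses $\gamma$ in between, by definition of a divide). Hence the return time is bounded above by $2D$: at most $D$ to leave the region on the ``black'' side through $\gamma$, plus at most $D$ of slack for the smoothing near double points and for the half-return bookkeeping $\fn$ then $\fb$. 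In the hyperbolic metric on $\Hy^2$ the geodesic flow expands unstable horocyclic length by exactly $e^{t}$ after time $t$, so the expansion accumulated along a return orbit is at most $e^{2D}$.

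Concretely I would carry this out as follows. First, normalize the Riemannian metric so that $\OO=\Hy^2/\Gamma$ has curvature $-1$; the statement of Theorem~\ref{T:antitwist} and the construction of $\Sn$ depend only on the topological data, so no generality is lost. Second, identify the unstable foliation of $f_\gamma$ with $\mathcal F^u\cap\Sn$ via Fried's observation (already recalled after Definition~\ref{D:dividemap}), so that $\lambda(f_\gamma)=\sup_x\big(\text{unstable expansion of the flow from }x\text{ to }f_\gamma(x)\big)$ — more precisely $\log\lambda$ equals the leafwise expansion, constant along unstable leaves of $f_\gamma$. Third, prove the return-time bound $\le 2D$ by the region-crossing argument above, being slightly careful at double points and order-$2$ cone points, where the smoothing perturbs the section but only on a small neighbourhood whose crossing time can be absorbed into the diameter bound (or made arbitrarily small). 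Fourth, combine: unstable length is multiplied by $e^{t}$ under time-$t$ flow in curvature $-1$, so $\lambda\le e^{2D}$.

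The main obstacle I expect is the return-time estimate, specifically making the constant honest rather than just $O(D)$. One must check that a unit tangent vector on $\Sn$ really does return after flowing a geodesic length at most $2D$: the naive bound ``$D$ to exit the current black region'' is clean, but the section $\Sn$ consists of vectors based \emph{on} $\gamma$ pointing into black faces, so after crossing one black region one lands back on $\gamma$ — this is already a return, giving bound $D$ for the half-return $\fn:\Sn\to\Sb$ and another $D$ for $\fb:\Sb\to\Sn$, hence $2D$ for $f_\gamma=\fb\circ\fn$. The subtlety is that after smoothing, the vectors pointing toward white faces have been pushed/pulled slightly, so a genuine point of (the smoothed) $\Sn$ may need to flow a little past $\gamma$ before meeting (the smoothed) $\Sb$; one handles this by taking the smoothing parameter $\eta\to 0$ and noting the stretch factor is a topological invariant of $f_\gamma$, so the bound $e^{2D}$ survives the limit. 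A secondary technical point is the treatment of order-$2$ cone points, where one passes to the orbifold cover and the same diameter bound applies to the (isometrically immersed) regions upstairs.
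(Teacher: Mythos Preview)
Your approach is essentially the paper's: use that $\Sn$ is a Birkhoff section for the geodesic flow, that in curvature $-1$ the flow expands unstable length by $e^{t}$, and that the first-return time is bounded by $2D$ (your decomposition via $\fn$ and $\fb$, each crossing one complementary region of diameter $\le D$, is exactly the reason behind the paper's ``twice the maximal diameter'').

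One point of comparison worth noting. The paper phrases the final step slightly differently: it uses that $\log\lambda$ equals the \emph{average} first-return time with respect to the measure of maximal entropy (an Abramov-type identity, since the geodesic flow has topological entropy $1$), and then bounds this average by the pointwise bound $2D$. Your formulation ``$\lambda(f_\gamma)=\sup_x(\text{unstable expansion})$'' and ``$\log\lambda$ equals the leafwise expansion, constant along unstable leaves'' is not literally correct: the metric expansion of the first-return map on an unstable leaf is $e^{r(x)}$ with $r(x)$ the return time, which genuinely varies from point to point and is not equal to $\lambda$ pointwise. What is true, and suffices for the upper bound you want, is that $\log\lambda$ is the exponential growth rate of unstable arc length under iteration, hence $\log\lambda\le\sup_x r(x)\le 2D$; the paper's averaging formulation sidesteps this imprecision. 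So your plan is sound, but tighten that step to either the growth-rate inequality or the entropy/average argument rather than a pointwise equality.
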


\begin{proof}
As mentioned earlier, in this setting, the monodromy is given by the first-return map along the geodesic flow. 
This flow is of Anosov type with expansion rate~$1$. 
Therefore the log of the stretch factor of the first-return map is the average (with respect to the measure of maximal entropy) of the first-return time. 
The latter being bounded by twice the maximal diameter of a face in~$\OO\setminus\gamma$, the result follows.
\end{proof}

\begin{corollary}\label{T:comparaison}
There exist sequences of Ba'cfi surfaces~$({\Sn}^{(n)})_{n\in\N}$ and divide monodromies~$(f_\gamma^{(n)})$ living in the strata $\mathcal{H}_{non-hyp}(2g-2)$ and~$\mathcal{H}_{non-hyp}(g-1,g-1)$ respectively, whose associated stretch factors~$(\lambda^{(n)})_{n\in\N}$ tend to~1. 
\end{corollary}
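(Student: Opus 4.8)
The plan is to realize Corollary~\ref{T:comparaison} as a direct application of Theorem~\ref{T:diameterlater} together with an explicit sequence of orbifolds and geodesic divides whose complementary regions shrink. First I would recall the Examples~5 and~6 (the even and odd ping-pong), which already furnish, for every $n$, a Ba'cfi-tiled surface in the desired strata: the even ping-pong has $4n-1$ squares and $g(\Sn)=2n$, and by Formula~\eqref{Genre} its boundary structure places it in $\mathcal H_{non-hyp}(2g-2)$, while the odd ping-pong gives the $\mathcal H_{non-hyp}(g-1,g-1)$ family (one should check the number and type of boundary components against the stratum definitions, but this is a routine bookkeeping step using that each double point contributes a cone singularity of the flat structure). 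The relevant point is that the underlying orbifolds in these examples are fixed small-type orbifolds (spheres with three cone points, some of order~2), so one has freedom to choose the hyperbolic metric.

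The core of the argument is metric, not combinatorial. Fix such an orbifold type $\OO$, but now let it vary over a sequence $\OO^{(n)}$ of hyperbolic structures — for instance by pushing one cone point's order to infinity or, more directly, by choosing a sequence of $(p_n,q_n,r_n)$-triangle orbifolds (or the appropriate cone-point analogues realizing the ping-pong divides) with $p_n,q_n,r_n\to\infty$. As the orders grow, the orbifold area grows, and one can arrange the geodesic representative $\gamma^{(n)}$ of the fixed combinatorial divide so that each complementary region is a geodesic polygon whose diameter $D_n$ tends to $0$: the faces become long thin polygons only in one direction is the danger, so the key estimate is that for the ping-pong divides the faces are quadrilaterals (and two exceptional faces) whose all side lengths can be taken uniformly small by a suitable choice of metric, hence their diameters go to zero. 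Then Theorem~\ref{T:diameterlater} gives $\lambda^{(n)}\le e^{2D_n}\to 1$, which is exactly the claim. Since $\lambda^{(n)}>1$ always (the maps are pseudo-Anosov, e.g.\ by Fried's remark as $\gamma^{(n)}$ is filling and geodesic on a hyperbolic orbifold), we get $\lambda^{(n)}\to 1$ from above.

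The main obstacle, and the step I would spend the most care on, is producing the sequence of metrics realizing $D_n\to 0$: one must exhibit, for each $n$, a hyperbolic structure on the relevant good orbifold together with a geodesic divide of the prescribed combinatorial type all of whose complementary faces have diameter $\le D_n$ with $D_n\to 0$. A clean way to do this is to start from a hyperbolic surface tiled by many small copies of a single fundamental polygon — e.g.\ take a high-degree orbifold cover, or equivalently raise the cone orders — so that the divide can be taken to be the union of the (short) edges of the tiling; then every face is one tile and $D_n$ is the tile diameter, which decreases as the number of tiles per unit area is forced up by hyperbolicity being incompatible with arbitrarily small area. I would need to verify that the resulting divide is genuinely of ping-pong combinatorial type (so that the stratum claim of the corollary holds) and that it is filling; both are checked by inspecting the black-and-white coloring and the face-degree sequence. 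The remaining ingredients — that $f_{\gamma}^{(n)}$ is pseudo-Anosov, that it lies in the asserted stratum, and the bound $e^{2D_n}$ — are quoted from the results already established above, so no further work is needed there.
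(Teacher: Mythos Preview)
Your approach has a genuine gap in the metric step, and it is the heart of the argument. You propose to make the complementary regions shrink by raising the cone orders (or passing to covers) while keeping the combinatorial type of the divide fixed. Neither operation achieves $D_n\to 0$. Covers are local isometries, so region diameters are unchanged. Increasing the orders $(p,q,r)$ on a triangle orbifold \emph{increases} the area toward~$\pi$, and with a divide of fixed combinatorial type (hence a fixed number of complementary faces) the faces cannot all have diameter tending to~$0$: a hyperbolic polygon of diameter $\le D$ has area $O(D^2)$, so a bounded number of such polygons cannot cover an orbifold of area bounded below. Your ``tiling by many small copies'' picture would require the number of faces of the divide to go to infinity, which is precisely what the ping-pong examples with fixed~$n$ do \emph{not} do; and once the number of faces varies, you are no longer using the ping-pong divides and you lose control of the strata.

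The paper's argument is different in kind. It fixes a hyperbolic orbifold and lets the \emph{divide} become long: by equidistribution of generic geodesics (Bowen) a long enough generic geodesic enters every $\epsilon$-ball of $\U\OO$, and the Anosov Closing Lemma produces a closed geodesic~$\gamma$ with the same property. One then shows that a complementary region of diameter $\ge D$ forces an unvisited ball of diameter $\sim D/3$ in $\U\OO$, so all regions have diameter $\le\epsilon$ and Theorem~\ref{T:diameterlater} gives $\lambda\le e^{2\epsilon}$. The stratum is read off from the number of components of~$\gamma$: a single closed geodesic gives two boundary components, hence $\mathcal H(g-1,g-1)$; passing through an order-$2$ cone point gives one boundary component, hence $\mathcal H(2g-2)$.

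Finally, you omit the ``non-hyp'' part of the conclusion entirely. The paper deduces it a posteriori: by Boissy--Lanneau, stretch factors in the hyperelliptic components are uniformly bounded away from~$1$, so any sequence with $\lambda^{(n)}\to 1$ must eventually lie in the non-hyperelliptic component. Your bookkeeping claim that the ping-pong examples land in $\mathcal H_{non\text{-}hyp}$ is neither proved nor, without such an argument, obvious.
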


\begin{proof}
Fixing a hyperbolic surface~$S$, it is known that generic geodesics tend to equidistribute with respect to the Liouville measure~\cite{Bowen}.
Thus a long enough generic geodesic will visit all balls of radius at least~$\epsilon$ in~$\U\OO$.  
Using the Anosov Closing Lemma~\cite[p.270]{Hasselblatt}, one can close such a geodesic without changing the fact that it visits all $\epsilon$-balls. 

Now if a region of~$\OO\setminus\gamma$ has diameter at least $D$, then there is a ball of diameter~$D/3$ avoided by~$\U\gamma$ in~$\U\OO$. 
Indeed, one can consider the ball formed of those vectors that are based in a circle whose diameter is the middle third of the diagonal and which are orthonal to this diameter up to an angle~$1/D$.
\begin{center}
\includegraphics[width=.3\textwidth]{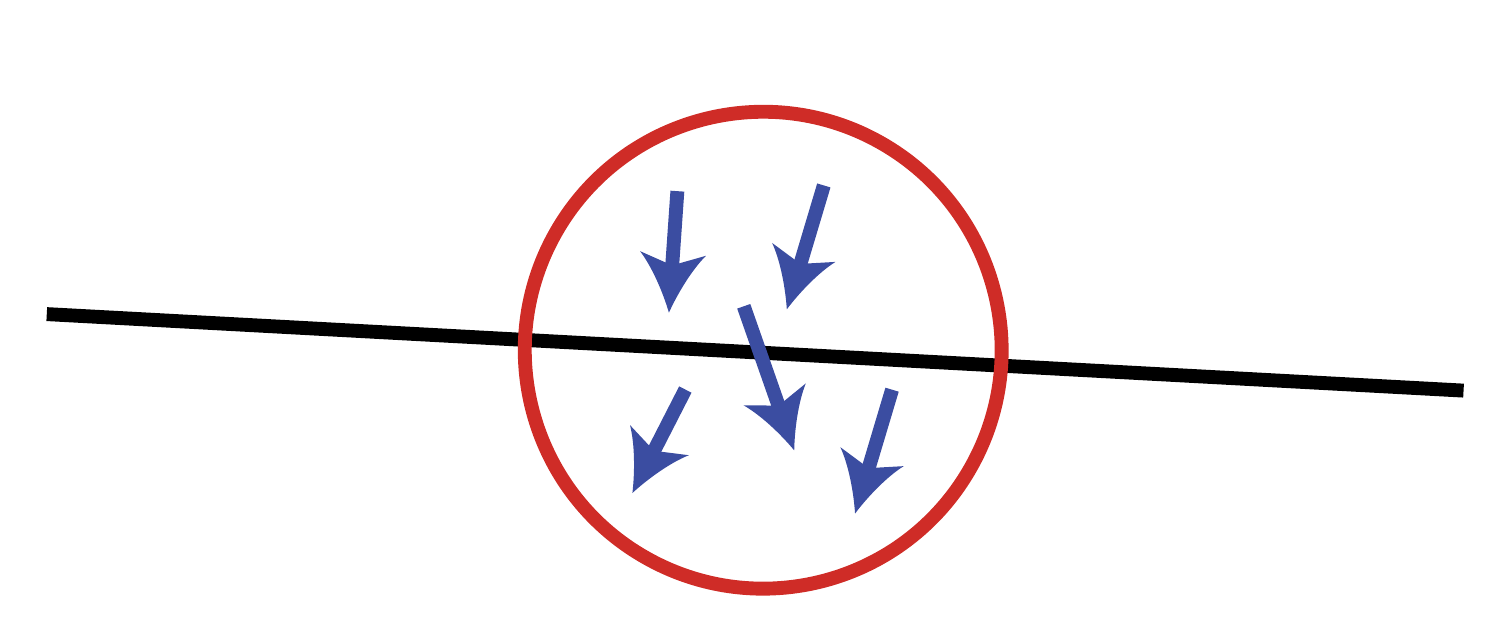}
\end{center}
Therefore a long enough generic closed geodesic will leave no region in its complement with diameter larger than~$\epsilon$. 
By Corollary~\ref{T:comparaison} the associated expansion rate is bounded by~$e^{2\epsilon}$. 

Concerning the strata in which these maps live, we first notice that they have two singularities only, since singularities only arise at the boundary of~$\Sn$ (the interior of~$\Sn$ being transverse to the weak foliations of the geodesic flow). The boundary of~$\Sn$ consists of two curves if~$\gamma$ consists of one curve only. 
Therefore the above maps live in~$\mathcal{H}(g-1,g-1)$. 
By a result of Boissy and Lanneau~\cite{BL}, the stretch factor is uniformly bounded away from 1 in the hyperelliptic component, so our examples have to live in the other connected component of the stratum. 

In order to go to~$\mathcal{H}(g-2)$, one can work with an orbifold having one cone point of order 2 and consider geodesic divides~$\gamma$ going through this order 2 point. 
One mimics the argument, but this time~$\Sn$ has only one boundary component. 
\end{proof}

\subsection{A comparison with Thurston's and Penner's construction}
Penner and Thurston gave machineries to construct pseudo-Anosov maps in terms of Dehn twists or multitwists~\cite{Penner, ThurstonConstruction}. 
It is therefore a natural question to ask about the intersection of those constructions with the divide monodromies. 
In general the constructions of Thurston and Penner yield pseudo-Anosovs whose invariant foliations may or may not be orientable. 
Since our divide monodromies always have orientable foliations, our construction is certainly not a generalization of the constructions of Thurston or Penner. 
Reversely, we can also construct divide monodromies that are pseudo-Anosov but do not arise from Thurston's or Penner's construction. 

\begin{corollary}\label{T:noThurstonPenner}
In the hyperelliptic strata $\mathcal{H}_{ell}(2g-2)$ and~$\mathcal{H}_{ell}(g-1,g-1)$ there are maps not arising from Thurston's or Penner's construction.
\end{corollary}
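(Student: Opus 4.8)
The strategy is to recognize non-membership in Penner's and Thurston's constructions through the Galois conjugates of the stretch factor, using the criteria of Shin--Strenner~\cite{SS} and Hubert--Lanneau~\cite{HL}, and then to place a suitable divide monodromy in each prescribed hyperelliptic component. Recall that if a pseudo-Anosov $\phi$ with stretch factor $\lambda$ arises from Penner's construction then no Galois conjugate of $\lambda$ lies on the unit circle~\cite{SS}, and that the stretch factors arising from Thurston's construction obey the algebraic constraint of~\cite{HL}, which is again violated once $\lambda$ has a Galois conjugate of modulus one. So it suffices to exhibit, in each of $\mathcal{H}_{ell}(2g-2)$ and $\mathcal{H}_{ell}(g-1,g-1)$, a divide monodromy $f_\gamma$ whose stretch factor $\lambda$ has a Galois conjugate on the unit circle --- equivalently, such that the irreducible factor of the characteristic polynomial of $f_\gamma$ on $\HH_1$ containing $\lambda$ has a root of modulus one.

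I would first settle genus~$2$, where $\mathcal{H}_{ell}(2)=\mathcal{H}(2)$ and $\mathcal{H}_{ell}(1,1)=\mathcal{H}(1,1)$ are the full strata, every surface in them being hyperelliptic. Example~4 with $(q,r)=(3,7)$ is a genus~$2$ divide monodromy in $\mathcal{H}(2)$ with characteristic polynomial $x^{4}-x^{3}-x^{2}-x+1$, which is irreducible; since $\lambda+\lambda^{-1}$ is a root of $t^{2}-t-3$, its conjugate $\tfrac{1-\sqrt{13}}{2}\approx-1.30$ lies in $(-2,2)$, so the remaining two roots of the quartic form a complex-conjugate pair on the unit circle, and this $f_\gamma$ comes from neither construction. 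For $\mathcal{H}_{ell}(1,1)$ I would run the same computation starting from a divide that is a single curve avoiding the order-$2$ points --- so that $\Sn$ has two boundary components and $f_\gamma$ lands in $\mathcal{H}(1,1)$ --- choosing cone orders for which the homological characteristic polynomial again has a root of modulus one.

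For the remaining genera I would turn to the even and odd ping-pong families of Examples~5 and~6, the families that also underlie Theorem~\ref{T:noPowerThurstonPennerlater}. Here two points need checking. First, for an appropriately symmetric choice of the cone orders, the evident symmetry of the ping-pong divide should lift through the unit tangent bundle to an involution of $\Sn$ with quotient a sphere and the correct number of fixed points, hence a hyperelliptic involution, placing $f_\gamma$ in $\mathcal{H}_{ell}(2g-2)$ --- or in $\mathcal{H}_{ell}(g-1,g-1)$ for the variant with a single curve avoiding the order-$2$ points. Second, for suitable cone orders the characteristic polynomial of the homological action --- for which these families supply a closed formula, as exhibited in Example~5 --- should have an irreducible factor, containing $\lambda$, with a root on the unit circle. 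With both in hand, the criteria of~\cite{SS} and~\cite{HL} finish the argument.

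The Penner exclusion is then immediate, and identifying the lifted symmetry as hyperelliptic is a finite check of a fixed-point count. I expect the genuine difficulty to be the Thurston exclusion carried out uniformly in $g$, together with the irreducibility bookkeeping it requires: one must ensure that the modulus-one root produced by the characteristic polynomial is a Galois conjugate of $\lambda$ itself --- that it lies in the same irreducible factor, not a spurious one --- for every member of the family at once. This is precisely the type of algebraic analysis that is pushed further, all the way to arbitrary powers $f_\gamma^{k}$, in the proof of Theorem~\ref{T:noPowerThurstonPennerlater}.
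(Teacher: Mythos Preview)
The paper's proof is entirely different and much shorter: by Leininger~\cite{Leininger}, stretch factors arising from Penner's or Thurston's construction are uniformly bounded away from~$1$, while Corollary~\ref{T:comparaison} (via Theorem~\ref{T:diameter}) produces divide monodromies with stretch factors tending to~$1$; hence infinitely many of those lie outside both constructions. No Galois-conjugate analysis is needed here --- those criteria are what the paper reserves for the stronger Theorem~\ref{T:noPowerThurstonPenner} about powers. Note also that Corollary~\ref{T:comparaison} lands in the \emph{non}-hyperelliptic components (Boissy--Lanneau~\cite{BL} forbid small stretch factors in the hyperelliptic ones), so the word ``hyperelliptic'' in the statement you are proving appears to be a slip; the paper's proof establishes the non-hyperelliptic version.

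Your own argument, read on its merits, has a genuine error in the Thurston exclusion. The Hubert--Lanneau constraint is that every Galois conjugate of a Thurston stretch factor lies in $\R\cup\Sph^1$; exhibiting a conjugate \emph{on} $\Sph^1$ does not violate this --- quite the opposite, Thurston's construction typically yields Salem-type numbers, whose conjugates sit precisely on the unit circle. To exclude Thurston via Proposition~\ref{excludeTP}(\ref{item1}) one needs a conjugate \emph{outside} $\R\cup\Sph^1$. Your genus-$2$ example $x^4-x^3-x^2-x+1$ has two real roots and two unimodular roots, so it rules out Penner by~\cite{SS} but says nothing about Thurston. The higher-genus plan inherits this defect, and in addition the hyperellipticity of the ping-pong examples is asserted but never checked.
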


\begin{proof}
It is known that the stretch factors of maps arising from Penner's or Thurston's construction are uniformly bounded away from 1~\cite{Leininger}. 
Hence the sequences of Corollary~\ref{T:comparaison} cannot be obtained from Penner's or Thurston's construction.
\end{proof}

Our goal now is to exhibit divide monodromies that are pseudo-Anosov but none of whose powers can be obtained by Thurston's or Penner's construction.
We use the following proposition to rule out that a pseudo-Anosov map can have a power in Thurston's or Penner's construction.

\begin{proposition}
\label{excludeTP}
Let~$\lambda$ be the stretch factor of a pseudo-Anosov map~$f$.
\begin{enumerate}
\item If a Galois conjugate of~$\lambda$ lies outside~$\R\cup\mathbb{S}^1$, then no power~$f^k$ arises from Thurston's construction,\label{item1}
\item if a Galois conjugate of~$\lambda$ lies on~$\mathbb{S}^1$, then no power~$f^k$ arises from Penner's construction. \label{item2}
\end{enumerate}
\end{proposition}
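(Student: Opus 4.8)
The plan is to deduce both statements from two algebraic restrictions on the dilatations of pseudo-Anosov maps produced by the two constructions, combined with the remark that the stretch factor of $f^k$ is $\lambda^k$ and that its Galois conjugates are precisely the $k$-th powers of the conjugates of $\lambda$.

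First I would isolate the two inputs. From Hubert--Lanneau~\cite{HL} and Shin--Strenner~\cite{SS}: if a pseudo-Anosov $g$ arises from Thurston's construction, then its dilatation $\mu$ is an eigenvalue of a matrix in $\mathrm{SL}_2$ over a totally real number field (the derivative of the affine map realizing $g$ on the associated flat surface), so $\mu+\mu^{-1}$ is totally real and every Galois conjugate of $\mu$ lies in $\R\cup\mathbb{S}^1$. From Shin--Strenner~\cite{SS}: if $g$ arises from Penner's construction, then no Galois conjugate of $\mu$ lies on $\mathbb{S}^1$. Granting these, statement~\ref{item2} is immediate: a conjugate $\omega$ of $\lambda$ lying on $\mathbb{S}^1$ is not a root of unity (otherwise $\lambda$ would be one, contradicting $\lambda>1$), hence $\omega^k$ is a conjugate of $\lambda^k$ on $\mathbb{S}^1$, so $f^k$ does not arise from Penner's construction. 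For~\ref{item1}, a conjugate $\mu$ of $\lambda$ outside $\R\cup\mathbb{S}^1$ has $|\mu|\neq1$, whence $|\mu^k|\neq1$ and $\mu^k\notin\mathbb{S}^1$; if in addition $\mu^k\notin\R$, then $\lambda^k$ has a Galois conjugate outside $\R\cup\mathbb{S}^1$ and $f^k$ does not arise from Thurston's construction.

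The crux, and the step I expect to need the most care, is exactly this last point: ruling out that $\mu^k$ accidentally becomes real. I would handle it using that divide monodromies --- and therefore all of their powers --- have orientable invariant foliations, so $\lambda$ is a Perron number whose minimal polynomial is reciprocal. The mechanism is that $\lambda^k+\lambda^{-k}=p_k(\lambda+\lambda^{-1})$ for the integer Chebyshev-type polynomial $p_k$ defined by $p_0=2$, $p_1=x$, $p_{j+1}=xp_j-p_{j-1}$, and that $p_k$ can send two distinct Galois conjugates of $\lambda+\lambda^{-1}$ to the same value only when some conjugate of $\lambda$ equals $\lambda\zeta$ or $\lambda^{-1}\zeta$ for a nontrivial $k$-th root of unity $\zeta$. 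The first possibility contradicts the Perron property since $|\lambda\zeta|=\lambda$, and the second contradicts it as well, because reciprocality would then make $\lambda\zeta^{-1}$, which again has modulus $\lambda$, a Galois conjugate of $\lambda$. Hence $p_k$ is injective on the conjugates of $\lambda+\lambda^{-1}$, so $\Q(\lambda+\lambda^{-1})=\Q(\lambda^k+\lambda^{-k})$, and the latter field is totally real if and only if the former is. But $\lambda+\lambda^{-1}$ is not totally real: $\mu+\mu^{-1}$ is one of its Galois conjugates, and $\overline{\mu+\mu^{-1}}=\mu+\mu^{-1}$ would force $|\mu|=1$ or $\mu\in\R$, both excluded. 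Therefore $\mu^k\notin\R$, which finishes~\ref{item1}. Apart from this field-theoretic bookkeeping --- and checking that it truly needs nothing beyond the Perron and reciprocal properties --- the argument is a direct appeal to~\cite{HL} and~\cite{SS}.
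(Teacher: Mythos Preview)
Your strategy matches the paper's: reduce to showing that the relevant Galois conjugate of~$\lambda^k$ stays outside~$\R\cup\mathbb{S}^1$ (resp.\ on~$\mathbb{S}^1$), then invoke Hubert--Lanneau and Shin--Strenner. Part~(\ref{item2}) is fine and essentially identical to the paper's argument. The problem is the generality of your proof of~(\ref{item1}). The proposition is stated for an arbitrary pseudo-Anosov~$f$, but you explicitly import the hypothesis that the minimal polynomial of~$\lambda$ is reciprocal, justified by ``divide monodromies have orientable invariant foliations''. That is not a hypothesis of the proposition, so as written you have only established~(\ref{item1}) for a restricted class of maps --- enough for the application in Theorem~\ref{T:noPowerThurstonPenner}, but not what is claimed.

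The paper avoids this by quoting a lemma of Strenner~\cite{Strenner}: for any pseudo-Anosov stretch factor one has~$\Q(\lambda)=\Q(\lambda^k)$, so the Galois conjugates of~$\lambda^k$ are exactly the~$\lambda_i^k$, with no collisions. Then if~$\lambda_i\notin\R$, the distinct conjugates~$\lambda_i\neq\overline{\lambda_i}$ force~$\lambda_i^k\neq\overline{\lambda_i}^k$, so~$\lambda_i^k\notin\R$; and~$|\lambda_i|\neq 1$ gives~$|\lambda_i^k|\neq 1$. Your detour through~$\lambda+\lambda^{-1}$ and the Chebyshev recursion is precisely what creates the need for reciprocality: the second of your two cases, where a conjugate equals~$\lambda^{-1}\zeta$, only arises because you are comparing traces rather than the conjugates themselves. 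If instead you argue directly with~$\lambda$, the Perron property alone (which does hold for every pseudo-Anosov stretch factor) already yields~$\Q(\lambda)=\Q(\lambda^k)$: two distinct conjugates with the same~$k$-th power would, after transporting by a Galois automorphism sending one of them to~$\lambda$, produce a second conjugate of modulus~$\lambda$. This removes both the extra hypothesis and the extra case, and is essentially what the paper does via the citation to Strenner.
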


\begin{proof}
It follows from the proof of Lemma~$8.2$ in Strenner's article~\cite{Strenner} that~$\mathbb{Q}(\lambda) = \mathbb{Q}(\lambda^k)$
and that if~$\lambda_1,\dots,\lambda_l$ are the Galois conjugates of~$\lambda$, then the Galois conjugates of~$\lambda^k$ are 
exactly~$\lambda_1^k,\dots,\lambda_l^k$. 

For~(\ref{item1}), note that if a Galois conjugate~$\lambda_i$ of~$\lambda$ lies outside~$\R\cup\mathbb{S}^1$, then
also~$\lambda_i^k$ lies outside~$\R\cup\mathbb{S}^1$. Indeed, the only way for~$\lambda_i^k$ to lie in~$\R\cup\mathbb{S}^1$
is if the argument of~$\lambda_i$ is a rational multiple of~$\pi$. If this were the case, we would have~$\lambda_i^{k} = \overline{\lambda_i}^{k}\in\R$. 
Since an irreducible integer polynomial has no multiple zeroes, this implies that~$\mathrm{deg}(\lambda^k)<\mathrm{deg}(\lambda)$, 
a contradiction. Therefore,~$\lambda_i^k$ lies outside~$\R\cup\mathbb{S}^1$. As the stretch factor of~$f^k$ equals~$\lambda^k$, the statement now 
follows from the fact that all Galois conjugates of the stretch factor of a pseudo-Anosov mapping class arising from Thurston's construction lie in~$\R\cup\mathbb{S}^1$, 
a result due to Hubert and Lanneau~\cite{HL}. 

Similarly, for~(\ref{item2}), we note that if a Galois conjugate~$\lambda_i$ of~$\lambda$ lies on the unit circle, then so does~$\lambda_i^k$. 
The statement then follows directly from the fact that a pseudo-Anosov stretch factor with a Galois conjugate on the unit circle cannot arise from Penner's construction,
a result due to Shin and Strenner~\cite{SS}. 
\end{proof}

We are now able to prove Theorem~\ref{T:noPowerThurstonPenner}, which we restate below for convenience.

\begin{theorem}
\label{T:noPowerThurstonPennerlater}
There exists a divide monodromy~$f$ so that no power~$f^k$ is obtained from Thurston's or Penner's construction, where~$k\ge1$.
\end{theorem}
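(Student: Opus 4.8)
The plan is to produce a single explicit divide monodromy $f$ whose stretch factor $\lambda$ has, simultaneously, a Galois conjugate on the unit circle off the real line and a Galois conjugate outside $\R\cup\mathbb{S}^1$; Proposition~\ref{excludeTP} then rules out, at a single stroke, every power $f^k$ from Thurston's construction by~(\ref{item1}) and from Penner's construction by~(\ref{item2}). For the example I would take the even ping-pong of Example~5 with $n=3$ --- eleven square tiles, $g(\Sn)=6$ --- on the orbifold with cone points of orders $2$, $q=6$ and $r=16$. Since $\tfrac12+\tfrac16+\tfrac1{16}<1$ this orbifold is hyperbolic; realizing $\gamma$ as a geodesic divide, Fried's remark makes $f:=f_\gamma$ pseudo-Anosov, and (as for every divide monodromy) its invariant foliations are orientable. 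Consequently $f_*$ preserves the intersection form on homology, its characteristic polynomial $P$ is reciprocal, and $\lambda$ equals the spectral radius of $f_*$, that is, the Perron root of $P$. Carrying out the homological computation exactly as in Examples~4 and~5 --- the matrices of $\fn$ and $\fb$ on the explicit generating family of curves, multiplied and then projected modulo the relations --- produces
\[
P(x)=x^{12}-4x^{11}-4x^{10}-4x^9-4x^8-4x^7-14x^6-4x^5-4x^4-4x^3-4x^2-4x+1 .
\]

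To locate the roots of $P$ I would pass to the resolvent sextic $R$ determined by $P(x)=x^6\,R(x+x^{-1})$, namely
\[
R(z)=z^6-4z^5-10z^4+16z^3+21z^2-12z-16 ,
\]
under which real roots of $R$ in $(-2,2)$ correspond to roots of $P$ on $\mathbb{S}^1\setminus\R$, real roots of $R$ outside $[-2,2]$ to real roots of $P$, and non-real roots of $R$ to roots of $P$ off $\R\cup\mathbb{S}^1$. The sign data $R(-3)>0>R(-2)$, $R(1.2)<0$, $R(1.3)>0$, $R(1.4)<0$ and $R(5)<0<R(6)$ exhibit four real roots of $R$: one in $(5,6)$, giving the pair $\{\lambda,\lambda^{-1}\}$; two in $(1,2)$, giving two complex-conjugate roots of $P$ on $\mathbb{S}^1$; and one in $(-3,-2)$, giving a pair of negative real roots of $P$. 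A Sturm-sequence count confirms $R$ has no further real roots, so its two remaining roots form a non-real conjugate pair, contributing four roots of $P$ outside $\R\cup\mathbb{S}^1$. I would then check that $R$ is irreducible over $\Q$, e.g.\ via its factorization modulo a small prime; since $x\mapsto x+x^{-1}$ is defined over $\Q$ and two-to-one, irreducibility of $R$ forces the minimal polynomial of $\lambda$ to meet every fibre, hence to contain a root on $\mathbb{S}^1\setminus\R$ and a root off $\R\cup\mathbb{S}^1$. Proposition~\ref{excludeTP} now yields the theorem.

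The step I expect to be the main obstacle is exactly this last, arithmetic part: one must \emph{certify} --- not merely observe numerically --- that the sextic $R$ genuinely has a non-real root and has real roots inside $(1,2)$ and outside $[-2,2]$, and that $R$ is irreducible, so that the roots exhibited above are honest Galois conjugates of $\lambda$ rather than of a spurious factor. Both are finite, exact verifications --- real-root counting by a Sturm sequence over $\Z$ and an irreducibility test by reduction modulo a prime --- and they are considerably cleaner to carry out on the degree-$6$ polynomial $R$ than directly on $P$.
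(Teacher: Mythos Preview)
Your proposal is correct and follows exactly the paper's route: the same even ping-pong example with $n=3$, $q=6$, $r=16$, the same degree-$12$ polynomial, and the same appeal to Proposition~\ref{excludeTP}. The only difference is that you spell out how to certify the root distribution (via the reciprocal-to-sextic substitution, sign checks, a Sturm count, and an irreducibility test), whereas the paper simply asserts that $P$ is the minimal polynomial with four real roots, four on~$\mathbb{S}^1$, and four off~$\R\cup\mathbb{S}^1$; one minor slip is that your two real roots of~$R$ in~$(1,2)$ yield four, not two, roots of~$P$ on~$\mathbb{S}^1$.
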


\begin{proof}
The stretch factor~$\lambda$ of the divide monodromy~$f$ of the even ping-pong with 11 squares on the~$(2,6,16)$-orbifold has minimal polynomial
$$x^{12} - 4x^{11} - 4x^{10} - 4x^9 - 4x^8 - 4x^7 - 14x^6 - 4x^5 - 4x^4 - 4x^3 - 4x^2 - 4x + 1.$$
This polynomial has four real roots, four roots on~$\mathbb{S}^1$ and four roots outside~$\R\cup\mathbb{S}^1$. In particular, no power~$f^k$ 
can be obtained from Thurston's or Penner's construction by Proposition~\ref{excludeTP}. 
\end{proof}


\section{Looking for an invariant train track}\label{S:TrainTrack}

In the case where divide monodromies are pseudo-Anosov, one would like to construct an invariant train track. 
Unfortunately, we are not able to achieve this goal in general: we can only do it if~$\OO\setminus\gamma$ consists of $n$-gons with $n\ge 5$. 
In other words, we can describe an invariant train track for a divide monodromy only if the cylinders are long enough.

Given a divide~$\gamma$ on an orbifold~$\OO$, 
we construct a train track~$T_\gamma$ on~$\Sn$ as follows: 
every $e$ edge of~$\gamma$ is naturally cooriented from the white face to black face adjacent to it. 
We then put two vertices $v_l(e)$ and $v_r(e)$, near the left (\emph{resp.}\ right) end of $e$. 
We place one edge $r(e)$, called \emph{red}, between these two vertices. 
Next for every white face $f^\circ$ and every vertex $v$ adjacent to it, denote by $e_1, e_2$ the two edges 
adjacent to $v$ and $f^\circ$ (in clockwise order around~$f^\circ$), and place an edge $g_{v, f^\circ}$ connecting $v_r(e_1)$ to $v_l(e_2)$, called \emph{green}. 
Finally for every black face $f^\bullet$ and every vertex $v$ adjacent to it, denote by $e_1, e_2$ the two edges 
adjacent to $v$ and $f^\bullet$ (in clockwise order again), and place an edge $b_{v, f^\bullet}$ connecting $v_l(e_1)$ to $v_r(e_2)$, called \emph{blue}. 
A local example of this train track is depicted in Figure~\ref{tt_ex} below.

\begin{figure}[h]
\begin{center}
\includegraphics[width=.3\textwidth]{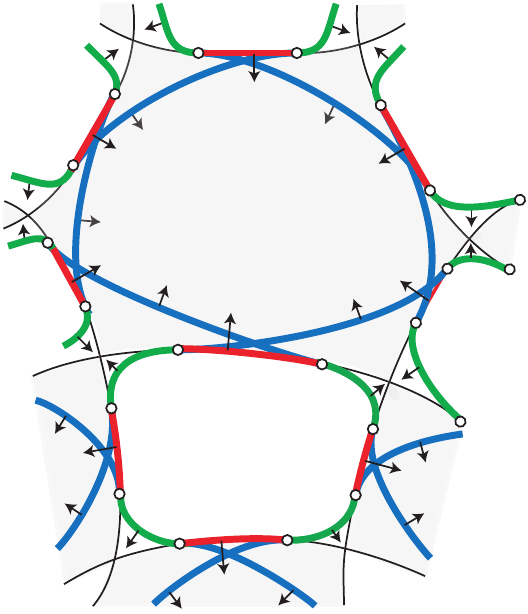}
\hspace{5mm}
\includegraphics[width=.3\textwidth]{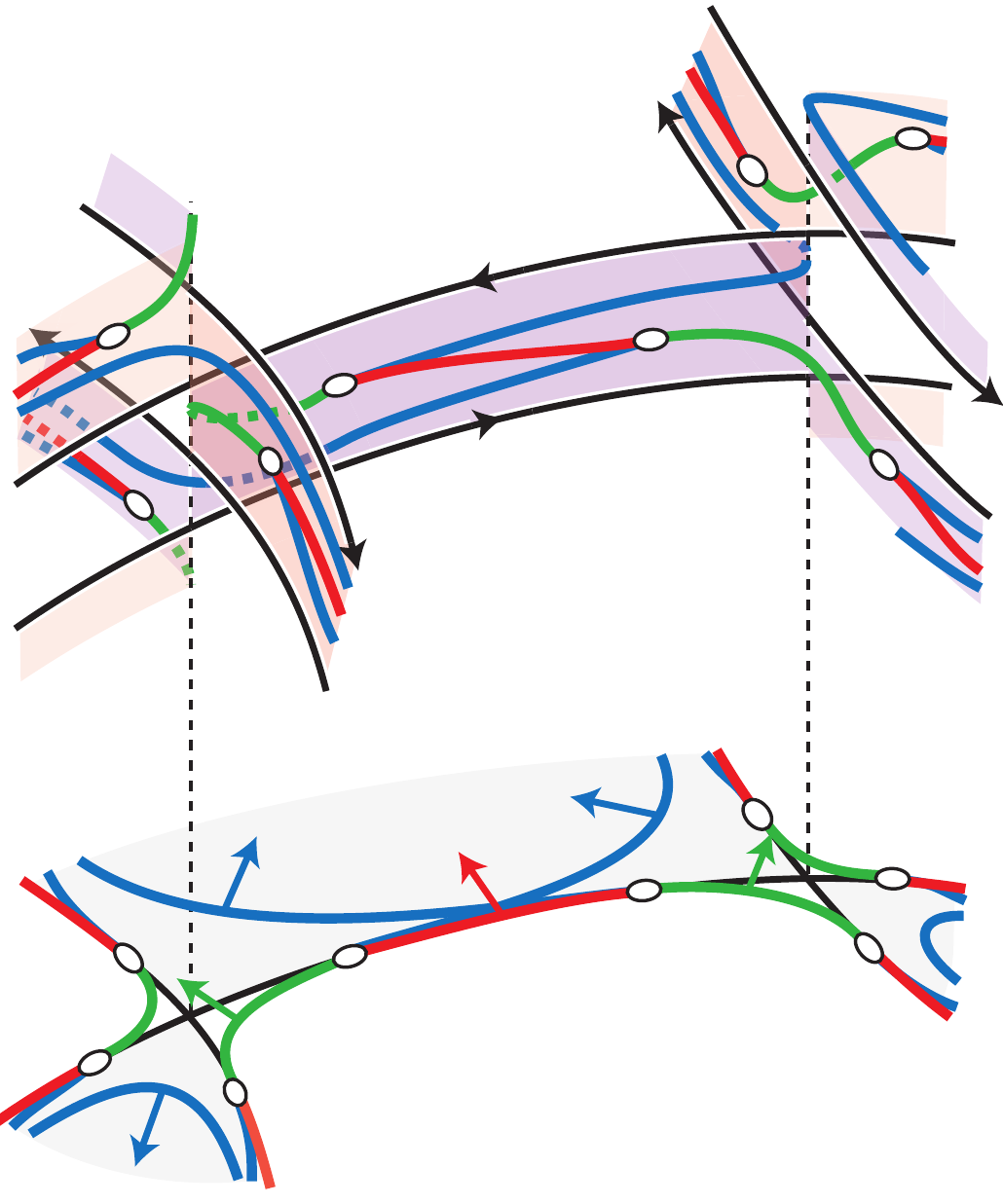}
\caption{A local picture of the train track~$T_\gamma$ on~$\Sn$, and how it sits in the unit tangent bundle.} 
\label{tt_ex}
\end{center}
\end{figure}
The construction of the train track actually works for any divide, but only if the all the~$n$-gons have~$n\ge 5$ are we able to see that the divide monodromy 
induces a train track map. So let us turn to the first-return map. 
As before, we only consider the half-first-return map~$\fn:\Sn\to\Sb$. 
So we take the brother train track on~$\Sb$ constructed in the exact same way. 
The images of the three type of edges are depicted in Figure~\ref{tt_images} below. 

\begin{figure}[h]
\begin{center}
\includegraphics[width=.9\textwidth]{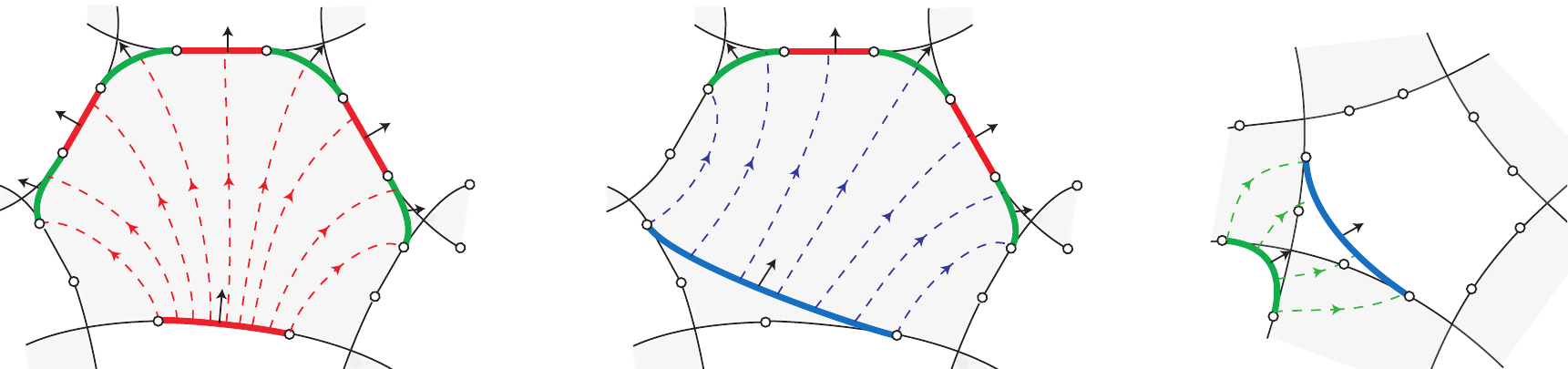}
\caption{The images of the three types of train track edges under the map~$\fn$.}
\label{tt_images}
\end{center}
\end{figure}

With this definition, one can see that~$\fb\circ\fn$ indeed induces a train-track map on~$T_\gamma$. 
However, the way we defined the induced map of~$\fn$, it does not respect the orientations in 3- or 4-gons,
so our train track is of limited practicality. 
We note that as soon as we have defined the train track~$T_\gamma$, we can deduce that a divide monodromy is pseudo-Anosov without 
referring to a geometric result on geodesic flows, but using only the combinatorics of train tracks.
We summarize the discussion of this section by proposing the following construction of pseudo-Anosov maps.
In the introduction, this construction was stated as Theorem~\ref{Ba'cfi-pa_thm}.

\begin{theorem}[Ba'cfi-construction of pseudo-Anosov mapping classes]
\label{Ba'cfi-palater}
Let~$S$ be a Ba'cfi-tiled surface so that all cylinders have width at least 5. Furthermore, let~$\tau_h$ a left-veering horizontal antitwist and let~$\tau_v$ be a right-veering vertical antitwist. Then~$\tau_v\circ\tau_h$ is pseudo-Anosov with stretch factor at least~$\frac{5}{2}$.
\end{theorem}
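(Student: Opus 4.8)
The plan is to use the train track $T_\gamma$ constructed in this section as a carrying track for the action of $f_\gamma = \fb\circ\fn$, and to extract a lower bound on the stretch factor by estimating the action on transverse measures (equivalently, by a Perron--Frobenius-type estimate on the incidence matrix). Since all cylinders have width at least $5$, every complementary region of $\gamma$ is an $n$-gon with $n\ge 5$, so by the discussion above the map $f_\gamma$ is carried by $T_\gamma$ and induces a genuine train track map. First I would recall from Figure~\ref{tt_images} the combinatorial rule describing the image of each red, green, and blue edge under $\fn$, and the analogous rule for $\fb$ on the sister track on $\Sb$; composing these two half-return descriptions gives the full train track map on $T_\gamma$.

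The key computation is then to bound below the growth rate of transverse measures under this map. I would assign to each edge of $T_\gamma$ its transverse measure (the weight it carries), and track how the three types of weights mix under one application of $f_\gamma$. The point of the width-$\ge 5$ hypothesis is that each antitwist wraps a transverse arc at least once fully around a cylinder of circumference $\ge 5$; concretely, $\fn$ sends a vertical side of a square to a segment of slope $(jk-2,-1)$ with $jk\ge 5$, and dually for $\fb$. Following a red edge through $\fn$ and then $\fb$, one sees it sweeping across at least $jk-2\ge 3$ consecutive squares in a cylinder and then being wrapped again by the second antitwist; carefully counting how many green and blue edges it runs over, I expect to extract that the combined measure is multiplied by a factor comparable to $(jk-2)/2 \ge 3/2$ at each step, but with the reflection/antitwist structure improving the constant to $5/2$. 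The cleanest way to package this is probably: show the incidence matrix $M$ of the train track map is nonnegative and, after passing to the irreducible block corresponding to the filling invariant lamination, has every column sum (or a suitable power) at least $5/2$; then the Perron--Frobenius eigenvalue, which equals the stretch factor since the track is invariant and carries the stable lamination, is at least $5/2$. Establishing pseudo-Anosovness is immediate once the train track map is shown to be carried with the connecting (mixing) property from Figure~\ref{tt_images}, so that the associated lamination is filling and has no closed leaves.

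The main obstacle I anticipate is the bookkeeping around the green edges in $3$- and $4$-gons versus $n$-gons with $n\ge 5$: the text itself warns that ``the way we defined the induced map of $\fn$, it does not respect the orientations in $3$- or $4$-gons,'' which is exactly why the hypothesis forces all faces to have at least $5$ sides. So the delicate point is to verify that, under the width-$\ge 5$ assumption, every branch of $T_\gamma$ maps over the track respecting the smooth structure (no illegal turns are created), and simultaneously that the incidence estimate is uniform — i.e.\ that no edge fails to be expanded by the claimed factor, even ones lying in the shortest admissible cylinders of circumference exactly $5$. Handling the extremal case (all cylinders of width exactly $5$, all cone orders minimal) is where the constant $5/2$ is pinned down; for longer cylinders or higher cone orders the bound only improves, so once the extremal case is done the general statement follows by monotonicity of the column sums in the width and cone-order parameters.
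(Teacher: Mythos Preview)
Your overall strategy---carry $\tau_v\circ\tau_h$ on the train track $T_\gamma$ and extract a Perron--Frobenius lower bound from the transition matrix---is exactly the paper's. What your outline lacks, and what the paper supplies in one line, is the concrete combinatorial count together with a specific test vector lying in the cone $C$ of weights satisfying the switch conditions. From Figure~\ref{tt_images} one reads off that each half-map sends a green edge to a single blue edge, a blue edge to at least one red and two green edges, and a red edge to at least three green and two red edges; the ``at least'' is precisely where the width-$\ge 5$ hypothesis enters. The paper then takes the weight $w$ assigning $2$ to green edges and $1$ to red and blue edges. This $w$ satisfies the switch relation $\text{green}=\text{red}+\text{blue}$ at every vertex, so $w\in C$, and composing the two half-rules gives $Mw\ge\frac{5}{2}w$ componentwise (the minimum ratio $5/2$ is attained at green edges). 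Perron--Frobenius on the invariant cone $C$ then yields both the eigenvalue bound and an eigenvector in $C$, hence an invariant singular measured foliation and the pseudo-Anosov conclusion.

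Two remarks on your specific suggestions. First, your column-sum idea uses the all-ones vector as test vector, but that vector does \emph{not} satisfy the switch condition $\text{green}=\text{red}+\text{blue}$ and so does not lie in $C$; one can still push a dual argument through, but the paper's choice of $w$ sidesteps this entirely. Second, the heuristic ``factor $(jk-2)/2\ge 3/2$, improved to $5/2$ by the reflection structure'' does not correspond to any real mechanism in the proof: once the three edge-image rules are written down, the constant $5/2$ drops out of the weighted count with no further subtlety, and there is no separate ``extremal case'' analysis to perform.
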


\begin{proof}
We already constructed an invariant train track for~$\tau_v\circ\tau_h$ in our discussion of this section. 
Let~$C$ be the cone of positive weights on the edges of the train track that satisfy the switch condition, that is, 
at each switch of the train track, the weights of the incoming edges sum up to the weights of the outgoing edges. 
These are exactly the weights that define a singular measured foliation, compare, for example, 
with Penner's construction of pseudo-Anosov mapping classes~\cite{Penner}. 
It follows directly from the definition that the action~$M$ induced by the map~$\tau_v\circ\tau_h$ on the cone of positive edge weights
preserves~$C$. We claim that the dominant eigenvalue of the restriction of~$M$ to~$C$ is bounded from below by~$\frac{5}{2}$. 
The eigenvector corresponding to this eigenvalue defines an invariant singular measured foliation for~$\tau_v\circ\tau_h$.  
This implies that~$\tau_v\circ\tau_h$ is pseudo-Anosov with stretch factor bounded from below by~$\frac{5}{2}$.

In order to prove the claim, let~$w$ be the weight with entries~$2$ for the green edges and and entries~$1$ for the red and the blue edges. 
It is immediate that~$w\in C$. Note that the induced actions of both~$\tau_h$ and~$\tau_v$ send a green edge to a blue edge, 
a blue edge to at least one red edge and two green edges, and a red edge to at least three green edges and two red edges, compare with Figure~\ref{tt_images}.
Note that here we use that all cylinders have size at least 5, or, in other words, that all the complementary regions of the divide are~$n$-gons with~$n\ge5$.
It follows for the action~$M$ induced by the composition~$\tau_v\circ\tau_h$ that~$Mw\ge \frac{5}{2}w$. 
In particular, the dominant eigenvalue of the restriction of~$M$ to~$C$ is at least~$\frac{5}{2}$ by Perron-Frobenius theory, which finishes the proof.
\end{proof}

\section{Questions}\label{S:Questions}

We raise several questions concerning divide monodromies. 

\begin{question}
How to detect on a Ba'cfi-tiled surface and a product of two antitwists whether they come from a geodesic divide on a hyperbolic orbifold? 
\end{question}

This question has a theoretical answer: one can form a divide~$\gamma$ from a Ba'cfi-tiled surface, and the question is whether this divide can be made geodesic on a hyperbolic orbifold. 
It is known~\cite{HassScott} that this is the case if the divide contains no immersed 1-gon or 2-gon, and there are algorithms that detect this. 
However, we would like to see whether there is a simple criterion directly on the Ba'cfi-tiled surface that shortcuts these remarks.

If a divide can be made geodesic on a hyperbolic surface, then the associated divide monodromy is pseudo-Anosov. 
More generally, we ask:

\begin{question}
How to detect when the product of two antitwists is pseudo-Anosov?
\end{question}

The same remark applies here, with the additional complication that even when the orbifold is not hyperbolic, the map~$f_\gamma$ may be pseudo-Anosov. 
Theorem~\ref{Ba'cfi-pa_thm} gives a partial answer to this question. 
We wonder whether the hypotheses in Theorem~\ref{Ba'cfi-pa_thm} could be weakened: 

\begin{question}
Suppose that $\gamma$ is a geodesic divide on a hyperbolic orbifold~$\OO$, is there a geometric way to construct an invariant train track, even when $\OO\setminus\gamma$ has triangles or quadrangles?
\end{question}

Concerning stretch factors, Theorem~\ref{T:diameter} gives upper bounds. Can we similarly find lower bounds?

\begin{question}
Suppose that $\gamma$ is a geodesic divide on a hyperbolic orbifold~$\OO$, are there geometric lower bounds on the stretch factor of~$f_\gamma$?
\end{question}

One could think of comparisons with the volume of the complements, see for examples the lower bounds given by Rogriguez-Migueles~\cite{Rodriguez}.

As remarked in Theorem~\ref{T:stretchfactor}, a divide monodromy map always corresponds to the center of the associated fibred face. 
We wonder whether the converse is true or not:

\begin{question}
Assume that $f$ is mapping class whose associated fibred face is symmetric and such that~$f$ corresponds to the center of the fibred face. Does that imply that $f$ is a divide monodromy?
\end{question}

\bigskip

\bibliographystyle{siam}

\begin{thebibliography}{}

\end{thebibliography}


\begin{thebibliography}{10}

\bibitem[A'C75]{ACampo1}{\sc A'Campo} Norbert : {Le groupe de monodromie du d\'eploiement des singularit\'es isol\'ees de courbes planes I}, {\it Math. Ann.} {\bf 213} (1975), 1--32. \url{https://doi.org/10.1007/BF01883883}

\bibitem[A'C98]{ACampo2}{\sc A'Campo} Norbert : {Generic immersions of curves, knots, monodromy and gordian number}, {\it Publ. Math. Inst. Hautes \'Etudes Sci.} {\bf 88}, (1998), 152--169. \url{https://doi.org/10.1007/BF02701769}

\bibitem[Bir17]{Birkhoff}{\sc\,Birkhoff} George \,D : {Dynamical systems with two degrees of freedom}, {\it Trans. Amer. Math. Soc.} {\bf 18} (1917), 199--300. \url{https://doi.org/10.1073/pnas.3.4.314}

\bibitem[BoL12]{BL}{\sc Boissy} Corentin and {\sc Lanneau} Erwan : {Pseudo-Anosov homeomorphisms on translation surfaces in hyper-elliptic components have large entropy}, {\it Geom. Funct. Anal.} {\bf 22} (2012), 74--106.   \url{https://doi.org/10.1007/s00039-012-0152-0}
 
\bibitem[Bow72]{Bowen}{\sc Bowen} Rufus : Equidistribution of closed geodesics, \emph{Am. J. Math.} {\bf 94} (1972), 413--423.
 
\bibitem[Bru94]{Brunella}{\sc Brunella} Marco : {On the discrete Godbillon-Vey invariant and Dehn surgery on geodesic flows}, {\it Ann. Fac. Sc. Toulouse Math.} {\bf 3} (1994), 335--346. \url{http://www.numdam.org/item/AFST_1994_6_3_3_335_0/}

\bibitem[CoD16]{Intersection} {\sc Cossarini} Marcos and {\sc Dehornoy} Pierre : Intersection norms on surfaces and Birkhoff cross sections, {\it preprint}. \url{https://arxiv.org/abs/1604.06688}

\bibitem[Deh15]{GenusOne}{\sc Dehornoy} Pierre : {Genus one Birkhoff sections for geodesic flows}, {\it Ergodic Theory and Dynamical Systems} {\bf 35} (2015), 1795--1813. \url{https://doi.org/10.1017/etds.2014.14}

\bibitem[Fri83]{Fried}{\sc Fried} David : {Transitive Anosov flows and pseudo-Anosov maps}, {\it Topology} {\bf 22} (1983), 299--303. \url{https://doi.org/10.1016/0040-9383(83)90015-0}

\bibitem[Ghy87]{Ghys}{\sc Ghys} \'Etienne : {Sur l'invariance topologique de la classe de Godbillon-Vey}, {\it Ann. Inst. Fourier} {\bf 37} (1987), 59--76. \url{https://doi.org/10.5802/aif.1111}

\bibitem[Has90]{Hashiguchi}{\sc Hashiguchi} Norikazu : {On the Anosov diffeomorphisms corresponding to geodesic flow on negatively curved closed surfaces}, {\it J. Fac. Sci. Univ. Tokyo} {\bf 37} (1990), 485--494.

\bibitem[HaS85]{HassScott}{\sc Hass} Joel and {\sc Scott} Peter : {Intersections of curves on surfaces}, {\it Israel J. Math.} {\bf 51} (1985), 90--120. \url{https://doi.org/10.1007/BF02772960}

\bibitem[Has02]{Hasselblatt}{\sc Hasselblatt} Boris : Hyperbolic dynamical systems, in \emph{Handbook of dynamical systems Vol. 1A}, North-Holland, Amsterdam, 2002, 239--319.

\bibitem[HuL06]{HL} {\sc Hubert} Pascal and {\sc Lanneau} Erwan : {Veech groups without parabolic elements}, {\it Duke Math.\ J.}~\textbf{133} (2006), 335--346.

\bibitem[Ish04]{Ishikawa}{\sc Ishikawa} Masaharu : {Tangent circle bundles admit positive open book decompositions along arbitrary links}, {\it Topology} {\bf 43} (2004), 215--232. \url{https://doi.org/10.1016/S0040-9383(03)00040-5}

\bibitem[LaT10]{LT}{\sc Lanneau} Erwan and {\sc Thiffeault} Jean-Luc : On the minimum stretch factor of pseudo-Anosov diffeomorphisms on surfaces of small genera, {\it Ann. Inst. Fourier} {\bf 61} (2011), 105--144. \url{https://doi.org/10.5802/aif.2599}

\bibitem[Lei04]{Leininger}{\sc Leininger} Christopher \,J : On groups generated by two positive multi-twists: Teichm\"uller curves and Lehmer's number, {\it Geom.\ Topol.}~\textbf{8} (2004), 1301--1359.

\bibitem[Pen88]{Penner}{\sc Penner} Robert C : A construction of pseudo-Anosov homeomorphisms, {\it Trans. Amer. Math. Soc.} {\bf 310} (1988), 179--197. \url{https://doi.org/10.1090/S0002-9947-1988-0930079-9}

\bibitem[Rod17]{Rodriguez}{\sc Rodr\'iguez Migueles} Jos\'e Andr\'es : On volumes of complements of periodic geodesics, {\it preprint}. \url{https://arxiv.org/abs/1711.10757}

\bibitem[Sch57]{Schwartzman}{\sc Schwartzman} Sol : {Asymptotic cycles}, {\it Ann.\ Math.} {\bf 66} (1957), 270--284. 

\bibitem[ShS15]{SS} {\sc Shin} Hyunshik and {\sc Strenner} Bal\'azs : {Pseudo-Anosov mapping classes not arising from Penner's construction}, {\it Geom.\ Topol.}~\textbf{19} (2015), 3645--3656.

\bibitem[Str17]{Strenner} {\sc Strenner} Bal\'azs : {Algebraic degrees of pseudo-Anosov stretch factors}, {\it Geom.\ Funct.\ Anal.}~\textbf{27} (2017), 1497--1539.


\bibitem[Thu80]{ThurstonNotes}{\sc Thurston} William : {The geometry and topology of three-manifolds}, {Princeton lecture notes}, unpublished (1980).

\bibitem[Thu86]{ThurstonNorm}{\sc Thurston} William : {A norm for the homology of three-manifolds}, {\it Mem. Amer. Math. Soc.} {\bf 339} (1986), 99--130.

\bibitem[Thu88]{ThurstonConstruction}{\sc Thurston} William : {On the geometry and dynamics of diffeomorphisms of surfaces}, {\it Bull. Amer. Math. Soc.} {\bf 19} (1988), 417--431. \url{https://projecteuclid.org/euclid.bams/1183554722}


\end{thebibliography}

\end{document}